\documentclass[english]{amsart}
\usepackage[T1]{fontenc}
\usepackage[latin9]{inputenc}
\usepackage{amstext}
\usepackage{amsthm}
\usepackage{amssymb}

\makeatletter
\numberwithin{equation}{section}
\numberwithin{figure}{section}
\theoremstyle{plain}
\newtheorem{thm}{\protect\theoremname}[section]
\theoremstyle{definition}
\newtheorem{defn}[thm]{\protect\definitionname}
\theoremstyle{plain}
\newtheorem{fact}[thm]{\protect\factname}
\theoremstyle{plain}
\newtheorem{lem}[thm]{\protect\lemmaname}

\makeatother

\usepackage{babel}
\providecommand{\definitionname}{Definition}
\providecommand{\factname}{Fact}
\providecommand{\lemmaname}{Lemma}
\providecommand{\theoremname}{Theorem}

\begin{document}

\title{Combined Algebraic Properties in Gaussian and Quaternion Ring}

\author{Aninda Chakraborty}

\address{Department of Mathematics, Government General Degree College at Chapra,
University of Kalyani}

\email{anindachakraborty2@gmail.com}
\begin{abstract}
It is known that for an IP$^{*}$ set $A$ in $(\mathbb{N},+)$ and
a sequence $\left\langle x_{n}\right\rangle _{n=1}^{\infty}$ in $\mathbb{N}$,
there exists a sum subsystem $\left\langle y_{n}\right\rangle _{n=1}^{\infty}$
of $\left\langle x_{n}\right\rangle _{n=1}^{\infty}$ such that $FS\left(\left\langle y_{n}\right\rangle _{n=1}^{\infty}\right)\cup FP\left(\left\langle y_{n}\right\rangle _{n=1}^{\infty}\right)\subseteq A$.
Similar types of results have also been proved for central$^{*}$
sets and $C^{*}$-sets where the sequences have been considered from
the class of minimal sequences and almost minimal sequences. In this
present work, our aim to establish the similar type of results for
the ring of Gaussian integers and the ring of integer quaternions.
\end{abstract}

\maketitle

\section{Introduction}

A famous Ramsey-theoretic result is Hindman\textquoteright s Theorem: 
\begin{thm}
Given a finite coloring of $\mathbb{N}=\bigcup_{i=1}^{r}A_{i}$, there
exists a sequence $\left\langle x_{n}\right\rangle _{n=1}^{\infty}$
in $\mathbb{N}$ and $i\in\left\{ 1,2,\ldots,r\right\} $ such that
\[
FS\left(\left\langle x_{n}\right\rangle _{n=1}^{\infty}\right)=\left\{ \sum_{n\in F}x_{n}:F\in\mathcal{P}_{f}\left(\mathbb{N}\right)\right\} \subseteq A_{i},
\]
 where for any set $X$, $\mathcal{P}_{f}\left(X\right)$ is the set
of all finite nonempty subsets of $X$.
\end{thm}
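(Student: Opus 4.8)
The plan is to give the Galvin--Glazer proof via the algebraic structure of the Stone--\v{C}ech compactification. First I would realize $\beta\mathbb{N}$ as the space of all ultrafilters on $\mathbb{N}$, with basic open sets $\overline{A}=\left\{ p\in\beta\mathbb{N}:A\in p\right\} $ for $A\subseteq\mathbb{N}$, so that $\beta\mathbb{N}$ is compact and Hausdorff. Then extend the addition on $\mathbb{N}$ to $\beta\mathbb{N}$ by declaring, for $p,q\in\beta\mathbb{N}$, that $A\in p+q$ if and only if $\left\{ n\in\mathbb{N}:-n+A\in q\right\} \in p$, where $-n+A=\left\{ m:n+m\in A\right\} $. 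I would check the routine facts that this operation is associative and that for each fixed $p$ the map $q\mapsto q+p$ is continuous, so $(\beta\mathbb{N},+)$ is a compact right-topological semigroup.

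Next I would invoke Ellis's theorem: every compact Hausdorff right-topological semigroup contains an idempotent. Fix $p=p+p$ in $\beta\mathbb{N}$. Since $\mathbb{N}=\bigcup_{i=1}^{r}A_{i}$ and $p$ is an ultrafilter, exactly one of the $A_{i}$ belongs to $p$; call it $A$. The heart of the argument is the recursion lemma: if $A\in p$ with $p$ idempotent, then $A^{\star}:=\left\{ n\in A:-n+A\in p\right\} $ satisfies $A^{\star}\in p$, and moreover $-n+A^{\star}\in p$ for every $n\in A^{\star}$. Granting this, I would build $\left\langle x_{n}\right\rangle _{n=1}^{\infty}$ by induction: pick $x_{1}\in A^{\star}$ (nonempty since $A^{\star}\in p$); having chosen $x_{1},\ldots,x_{k}$ with $FS\left(\left\langle x_{i}\right\rangle _{i=1}^{k}\right)\subseteq A^{\star}$ and $B_{k}:=A^{\star}\cap\bigcap_{s\in FS(\langle x_{i}\rangle_{i=1}^{k})}(-s+A^{\star})\in p$, choose $x_{k+1}\in B_{k}$. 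Then every element of $FS\left(\left\langle x_{n}\right\rangle _{n=1}^{\infty}\right)$ lies in $A^{\star}\subseteq A$, and we are done with $i$ the index of $A$.

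The main obstacle is verifying the recursion lemma, which is a short but delicate ultrafilter computation: from $p+p=p$ one gets $\left\{ n:-n+A\in p\right\} =A\in p$, so $A^{\star}\in p$; and for $n\in A^{\star}$, the identity $-n+A^{\star}\supseteq\left(-n+A\right)\cap\left\{ m:-m+(-n+A)\in p\right\} $, combined again with idempotency, shows $-n+A^{\star}\in p$. Everything else --- the semigroup axioms, right-topological continuity, and Ellis's theorem --- I would either cite or dispatch quickly. (An alternative is Hindman's original purely combinatorial argument, but the ultrafilter route is cleaner and better adapted to the $IP^{*}$/central$^{*}$ machinery used later in the paper.)
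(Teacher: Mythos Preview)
The paper does not actually prove this statement: it is Hindman's Theorem, cited in the introduction as a classical result with no proof given. So there is no ``paper's own proof'' to compare against.

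Your proposal is the standard Galvin--Glazer argument and is essentially correct. One small slip: you write ``from $p+p=p$ one gets $\{n:-n+A\in p\}=A\in p$,'' but the set $\{n:-n+A\in p\}$ is not equal to $A$; what idempotency gives you is $\{n:-n+A\in p\}\in p$, and then $A^{\star}=A\cap\{n:-n+A\in p\}$ is the intersection of two members of $p$, hence in $p$. With that corrected, the recursion lemma and the inductive construction go through exactly as you describe. This ultrafilter proof is indeed well suited to the paper's later use of idempotents in $\beta S$ and the $A^{\star}$ machinery (which the paper invokes via \cite[Lemma 4.14]{key-7}).
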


A strongly negative answer to a combined additive and multiplicative
version of Hindman\textquoteright s Theorem was presented in \cite[Theorem 2.11]{key-6}.
Given a sequence $\left\langle x_{n}\right\rangle _{n=1}^{\infty}$
in $\mathbb{N}$, Let 
\[
PS\left(\left\langle x_{n}\right\rangle _{n=1}^{\infty}\right)=\left\{ x_{m}+x_{n}:m,n\in\mathbb{N},m\neq n\right\} 
\]
 and 
\[
PP\left(\left\langle x_{n}\right\rangle _{n=1}^{\infty}\right)=\left\{ x_{m}\cdot x_{n}:m,n\in\mathbb{N},m\neq n\right\} .
\]

\begin{thm}
There exists a finite partition $\mathcal{R}$ of $\mathbb{N}$ with
no one-to-one sequence $\left\langle x_{n}\right\rangle _{n=1}^{\infty}$
in $\mathbb{N}$ such that $PS\left(\left\langle x_{n}\right\rangle _{n=1}^{\infty}\right)\cup PP\left(\left\langle x_{n}\right\rangle _{n=1}^{\infty}\right)$
is contained in one cell of the partition $\mathcal{R}$.
\end{thm}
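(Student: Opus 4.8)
The plan is to exhibit one explicit partition $\mathcal{R}$ that works; equivalently, to construct a finite coloring $c\colon\mathbb{N}\to\{1,\dots,r\}$ so that for no infinite $X\subseteq\mathbb{N}$ is the set $PS(X)\cup PP(X):=\{a+b:a\neq b\in X\}\cup\{ab:a\neq b\in X\}$ contained in a single class. First I would make two harmless reductions. Since $PS$ and $PP$ depend only on the range of the sequence and are unchanged by reordering, I may assume the sequence is strictly increasing, $x_1<x_2<\dots$; and since the range of a one-to-one sequence is infinite it contains arbitrarily fast-growing increasing subsequences $\langle x_{n_k}\rangle$, whose $PS\cup PP$ is contained in that of the original sequence. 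Hence it suffices to defeat all increasing sequences satisfying a growth condition $x_{n+1}>T(x_n)$ for a fixed, rapidly growing $T$ of my own choosing, so I will freely assume the gaps are as large as convenient.

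The structural point driving the coloring is a dichotomy for such fast-growing sequences. For $i<j$ the pairwise sum $x_i+x_j$ lies in the narrow window $(x_j,\,x_j+x_{j-1}]\subseteq(x_j,2x_j)$, i.e.\ it hugs the larger term $x_j$ and, read in binary, agrees with $x_j$ in all sufficiently high bit positions. By contrast the pairwise product $x_ix_j$ sits far above $x_j$ — its leading bit is near position $\log_2x_i+\log_2x_j$ — and in binary it is the sum of shifted copies of $x_i$ placed at the bit positions of $x_j$, so in the range just below its leading block it carries an imprint of $x_i$ rather than of $x_j$. I would therefore build $c$ from the binary expansion: group the bit positions into consecutive blocks $I_0,I_1,I_2,\dots$ whose lengths grow like a tower of exponentials, and let $c(n)$ record a bounded amount of data — a few parities about the bits of $n$ in the block (or two blocks) immediately below the one containing the leading bit of $n$, together with the index of that leading block modulo a small number. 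The design goal is that when $x_i\ll x_j$ the recorded data of $x_i+x_j$ coincides with that of $x_j$, whereas the recorded data of $x_ix_j$ is altered by the shifted copies of $x_i$; then for some pair, or some bounded configuration $x_i,x_j,x_k$, one gets $c(x_i+x_j)\neq c(x_ix_j)$, contradicting that $PS(X)\cup PP(X)$ lies in one class.

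Concretely, after fixing $c$ I would take an arbitrary fast-growing increasing sequence lying in a putative single class $A$ and argue by cases on the block positions of the $x_n$, in each case isolating indices that yield a sum and a product with different recorded data. The main obstacle — and the reason the blocks must grow tower-fast and $c$ must read information at more than one scale — is robustness: an adversary will try to tailor the sequence to the coloring (terms just below a power of $2$; terms whose binary expansions have long runs of zeros exactly at the block boundaries; terms with prescribed low-order residues; terms chosen so that each relevant block already contains an even number of ones) precisely so that every pairwise sum and every pairwise product lands in one class. The heart of the argument is to show that no such sequence survives for the chosen $c$: this is the delicate, case-heavy step, and arranging the blocks and the recorded statistics so that all of these evasions are simultaneously blocked is where essentially all of the work lies. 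The two reductions and the bookkeeping with carries in binary addition and multiplication are then routine.
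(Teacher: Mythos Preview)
The paper does not contain a proof of this theorem at all: it is quoted as background and attributed to \cite[Theorem~2.11]{key-6}. So there is no ``paper's own proof'' to compare against; the only question is whether what you have written is itself a proof, and it is not.

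What you have supplied is a plan, and you say so explicitly: ``this is the delicate, case-heavy step, and arranging the blocks and the recorded statistics so that all of these evasions are simultaneously blocked is where essentially all of the work lies.'' The coloring $c$ is never actually defined --- you only say it should record ``a few parities'' in ``the block (or two blocks) immediately below the one containing the leading bit'' together with a residue of the leading-block index --- and no case analysis is carried out. A referee could not check anything here, because there is nothing concrete to check.

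Beyond the incompleteness, the heuristic itself has a gap you should notice before investing in the details. You argue that for $x_i\ll x_j$ the recorded data of $x_i+x_j$ coincides with that of $x_j$, while the recorded data of $x_ix_j$ ``carries an imprint of $x_i$.'' But this does not by itself yield $c(x_i+x_j)\neq c(x_ix_j)$: if all pairwise sums lie in one class $A$, your first observation forces $c(x_j)=A$ for all large $j$, and then ``an imprint of $x_i$'' is an imprint of something that is \emph{also} in class $A$, so no contradiction is visible. Hindman's actual coloring is built so that the relevant statistic is additive in a way that multiplication genuinely disrupts regardless of the colors of the factors; your sketch does not yet isolate any such statistic. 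Your two preliminary reductions (passing to an increasing subsequence, and then to one with arbitrarily fast growth) are valid and standard, and the overall flavor --- a finite coloring read off from digit expansions near the leading position --- is indeed how the original argument goes, but everything that makes the argument work remains to be supplied.
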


Given a discrete semigroup $(S,\cdot)$, we take the points of $\beta S$
to be the ultrafilters on $S$, identifying the principal ultrafilters
with the points of $S$ and thus pretending that $S\subseteq\beta S$.
Given $A\subseteq S$, let us define the subsets of $\beta S$ by
the following formula:
\[
\overline{A}=\{p\in\beta S:A\in p\}
\]
 Then the set $\left\{ \bar{A}\subseteq\beta S:A\subseteq\mathbb{N}\right\} $
forms a basis for the closed sets of $\beta S$ as well as for the
open sets. The operation $\cdot$ can be extended to the Stone-\v{C}ech
compactification $\beta S$ of $S$, so that $(\beta S,\cdot)$ is
a compact right topological semigroup (meaning that for any $p\in\beta S$,
the function $\rho_{p}:\beta S\rightarrow\beta S$ defined by $\rho_{p}(q)=q\cdot p$
is continuous) with $S$ contained in its topological center (meaning
that for any $x\in S$, the function $\lambda_{x}:\beta S\rightarrow\beta S$
defined by $\lambda_{x}(q)=x\cdot q$ is continuous). A nonempty subset
$I$ of a semigroup $T$ is called a left ideal of $S$ if $TI\subset I$,
a right ideal if $IT\subset I$, and a two sided ideal (or simply
an ideal) if it is both a left and right ideal. A minimal left ideal
is the left ideal that does not contain any proper left ideal. Similarly,
we can define minimal right ideal and smallest ideal.

Any compact Hausdorff right topological semigroup $T$ has a smallest
two sided ideal 
\begin{align*}
K(T) & =\bigcup\{L:L\text{ is a minimal left ideal }\text{of }T\}\\
 & =\bigcup\{R:R\text{ is a minimal right ideal of }T\}.
\end{align*}
 Given a minimal left ideal $L$ and a minimal right ideal $R$, $L\cap R$
is a group, and in particular contains an idempotent. An idempotent
in $K(T)$ is a minimal idempotent. If $p$ and $q$ are idempotents
in $T$ we write $p\leq q$ if and only if $p\cdot q=q\cdot p=p$.
An idempotent is minimal with respect to this relation if and only
if it is a member of the smallest ideal $K\left(T\right)$ of $T$.

Given $p,q\in\beta S$ and $A\subseteq S$, $A\in p\cdot q$ if and
only if $\{x\in S:x^{-1}A\in q\}\in p$, where $x^{-1}A=\{y\in S:x\cdot y\in A\}$.
See \cite{key-7} for an elementary introduction to the algebra of
$\beta S$ and for any unfamiliar details. 

\section{The Ring of Gaussian Integers $\mathbb{Z}\left[i\right]$}

Let $\left(S,+\right)$ is a commutative semigroup. A set $A\subseteq S$
is called an $\text{IP}^{*}$ set if it belongs to every idempotent
in $\beta S$ and it is called $\text{central}^{*}$ if it is contained
by every minimal idempotent. Bergelson and Hindman proved in \cite{key-2}
that $\text{IP}^{*}$ sets in $\mathbb{N}$ possess substantial amount
of additive and multiplicative properties. Given a sequence $\left\langle x_{n}\right\rangle _{n=1}^{\infty}$
in $\mathbb{N}$, we let $FP$$\left(\left\langle x_{n}\right\rangle _{n=1}^{\infty}\right)$
be the product analogue of finite sums.
\begin{defn}
Let $\left(S,+\right)$ be a semigroup and let $\left\langle x_{n}\right\rangle _{n=1}^{\infty}$
is a sequence in $S$. The sequence $\left\langle y_{n}\right\rangle _{n=1}^{\infty}$
is a sum subsystem of $\left\langle x_{n}\right\rangle _{n=1}^{\infty}$
if and only if there is a sequence $\left\langle H_{n}\right\rangle _{n=1}^{\infty}$
in $\mathcal{P}_{f}\left(\mathbb{N}\right)$ such that for every $n\in\mathbb{N}$,
max$H_{n}$ $<$ min$H_{n+1}$ and $y_{n}=\sum_{t\in H_{n}}x_{t}$.
Where $\mathcal{P}_{f}\left(\mathbb{N}\right)$ is the set of all
finite subsets of $\mathbb{N}$.
\end{defn}

The following Theorem \cite[Theorem 2.6]{key-2} shows that $\text{IP}^{*}$
sets in $\mathbb{N}$ have substantially rich multiplicative structures.
\begin{thm}
\label{Theorem 2.2}Let $\left\langle x_{n}\right\rangle _{n=1}^{\infty}$
in $\mathbb{N}$ and $A$ be an $IP^{*}$ set in $\left(\mathbb{N},+\right)$.
Then there exists a sum subsystem $\left\langle y_{n}\right\rangle _{n=1}^{\infty}$
of $\left\langle x_{n}\right\rangle _{n=1}^{\infty}$ such that $FS\left(\left\langle y_{n}\right\rangle _{n=1}^{\infty}\right)\cup FP\left(\left\langle y_{n}\right\rangle _{n=1}^{\infty}\right)\subseteq A$.
\end{thm}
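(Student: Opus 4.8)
The plan is to combine the Galvin--Glazer idempotent-ultrafilter argument on the additive side with a dilation trick that disposes of the multiplicative side without ever invoking a multiplicative idempotent. First I would move into $\beta\mathbb{N}$ and set $T=\bigcap_{m=1}^{\infty}\overline{FS\bigl(\langle x_{n}\rangle_{n=m}^{\infty}\bigr)}$. As the sets $FS(\langle x_{n}\rangle_{n=m}^{\infty})$ decrease with $m$, $T$ is a decreasing intersection of nonempty closed subsets of the compact space $\beta\mathbb{N}$, hence nonempty; and a routine tail computation (for $a=\sum_{t\in H}x_{t}$ with $k=\max H$ one has $-a+FS(\langle x_{n}\rangle_{n=m}^{\infty})\supseteq FS(\langle x_{n}\rangle_{n=k+1}^{\infty})$) shows $T$ is a subsemigroup of $(\beta\mathbb{N},+)$. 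Being a compact right topological semigroup, $T$ contains an idempotent $p$, and by construction $FS(\langle x_{n}\rangle_{n=m}^{\infty})\in p$ for every $m$; since $A$ is $\mathrm{IP}^{*}$, $A\in p$.

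Next I would isolate the multiplicative ingredient as a lemma: if $A$ is $\mathrm{IP}^{*}$ in $(\mathbb{N},+)$, then for each $r\in\mathbb{N}$ the dilate $A/r:=\{n\in\mathbb{N}:rn\in A\}$ is again $\mathrm{IP}^{*}$ in $(\mathbb{N},+)$. The point is that $\mu_{r}\colon n\mapsto rn$ is an endomorphism of $(\mathbb{N},+)$, so its continuous extension $\widetilde{\mu_{r}}\colon\beta\mathbb{N}\to\beta\mathbb{N}$ is an endomorphism of $(\beta\mathbb{N},+)$; hence $\widetilde{\mu_{r}}(q)$ is an additive idempotent whenever $q$ is, and $A\in\widetilde{\mu_{r}}(q)$ — which holds because $A$ is $\mathrm{IP}^{*}$ — unwinds to $A/r=\mu_{r}^{-1}[A]\in q$. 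Thus $A/r$ lies in every additive idempotent; in particular $A/r\in p$ for every $r$, and so $\bigcap_{r\in R}(A/r)\in p$ for every finite $R\subseteq\mathbb{N}$.

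Then I would build $\langle y_{n}\rangle$ and blocks $\langle H_{n}\rangle$ recursively, maintaining after stage $n$: (a) $\max H_{i}<\min H_{i+1}$ and $y_{i}=\sum_{t\in H_{i}}x_{t}$; (b) $FS(\langle y_{i}\rangle_{i=1}^{n})\cup FP(\langle y_{i}\rangle_{i=1}^{n})\subseteq A$; (c) $C_{n}:=\bigcap\{A-s:s\in FS(\langle y_{i}\rangle_{i=1}^{n})\cup\{0\}\}\in p$. Writing $D_{n}:=\bigcap\{A/r:r\in FP(\langle y_{i}\rangle_{i=1}^{n})\cup\{1\}\}$, which lies in $p$ by the lemma and hence needs no bookkeeping, at stage $n+1$ I would consider
\[
E_{n}=FS\bigl(\langle x_{t}\rangle_{t=\max H_{n}+1}^{\infty}\bigr)\cap C_{n}\cap\{y:-y+C_{n}\in p\}\cap D_{n}.
\]
Each of the four factors lies in $p$ — the first because $p\in T$, the second by (c), the third because $C_{n}\in p=p+p$, the fourth by the lemma — so $E_{n}\in p$, in particular $E_{n}\neq\emptyset$; choose $y_{n+1}\in E_{n}$. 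Membership in the first factor yields a finite $H_{n+1}\subseteq\{\max H_{n}+1,\max H_{n}+2,\dots\}$ with $y_{n+1}=\sum_{t\in H_{n+1}}x_{t}$, giving (a); putting $C_{n+1}=C_{n}\cap(-y_{n+1}+C_{n})$ one checks it equals $\bigcap\{A-s:s\in FS(\langle y_{i}\rangle_{i=1}^{n+1})\cup\{0\}\}$ and lies in $p$, giving (c); and $y_{n+1}\in C_{n}\cap D_{n}$ forces every new finite sum and every new finite product of the $y_{i}$ into $A$, giving (b). Since every finite set of indices is eventually contained in $\{1,\dots,n\}$, invariant (b) delivers $FS(\langle y_{n}\rangle)\cup FP(\langle y_{n}\rangle)\subseteq A$, and (a) says $\langle y_{n}\rangle$ is a sum subsystem of $\langle x_{n}\rangle$.

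The step I expect to be the real obstacle is spotting the asymmetry that makes all of this work with a single, purely additive idempotent. The translates $A-s$ are generally \emph{not} $\mathrm{IP}^{*}$, so they have to be carried along inside $p$ by the idempotent bookkeeping in (c); the dilates $A/r$, by contrast, \emph{are} $\mathrm{IP}^{*}$, hence automatically in $p$, which is precisely what lets the multiplicative demand be met with the same $p$ — so no multiplicative idempotent is needed, and indeed none is available, since $T$ is not multiplicatively closed. Finally, the requirement that $\langle y_{n}\rangle$ be a sum subsystem is exactly what forces the recursion to live inside $T$, so that each tail $FS(\langle x_{t}\rangle_{t=\max H_{n}+1}^{\infty})$ is a member of $p$ and can be intersected with the remaining constraints.
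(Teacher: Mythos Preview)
Your argument is correct and is essentially the classical Bergelson--Hindman proof. Note, however, that the paper does not give its own proof of this theorem: it is quoted from \cite[Theorem 2.6]{key-2} as background. The paper's \emph{analogous} proofs---for $\mathrm{central}^{*}$ and $C^{*}$ sets in $\mathbb{Z}[i]$ (the theorem following Lemma~\ref{Lemma 2.19})---follow the same template you used: pick an idempotent $p$ with $FS(\langle x_{n}\rangle_{n=m}^{\infty})\in p$ for all $m$, prove a dilation lemma saying $z^{-1}A$ retains the relevant $*$-property (your ``$A/r$ is $\mathrm{IP}^{*}$'' is the $\mathbb{N}$-version of Lemma~\ref{Lemma 2.19}), and then run the inductive construction intersecting tails of $FS$, additive translates $-s+A^{\star}$, and multiplicative dilates $s^{-1}A^{\star}$. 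The only cosmetic difference is that the paper packages the additive bookkeeping via the single set $A^{\star}=\{s\in A:-s+A\in p\}$ from \cite[Lemma 4.14]{key-7}, whereas you carry the explicit family $C_{n}$ together with $\{y:-y+C_{n}\in p\}$; these are equivalent formulations of the same idempotent trick.
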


In \cite{key-3} the above theorem was extended for $\text{central}^{*}$
sets where the collection of sequences were taken from a class of
sequences called minimal sequence. 
\begin{defn}
Let $\left(S,+\right)$ be a discrete commutative semigroup. A sequence
$\left\langle x_{n}\right\rangle _{n=1}^{\infty}$ in $S$ is called
a minimal sequence if 
\[
\bigcap_{n=1}^{\infty}\overline{FS\left\langle x_{n}\right\rangle _{n=m}^{\infty}}\cap K(\beta S)\neq\emptyset.
\]
\end{defn}

\begin{thm}
\label{Theorem 2.4}Let $\left\langle x_{n}\right\rangle _{n=1}^{\infty}$
be a minimal sequence in $\mathbb{N}$ and $A$ be a $\text{central}^{*}$
set in $\left(\mathbb{N},+\right)$. Then there exists a sum subsystem
$\left\langle y_{n}\right\rangle _{n=1}^{\infty}$ of $\left\langle x_{n}\right\rangle _{n=1}^{\infty}$
such that $FS\left(\left\langle y_{n}\right\rangle _{n=1}^{\infty}\right)\cup FP\left(\left\langle y_{n}\right\rangle _{n=1}^{\infty}\right)\subseteq A$.
\end{thm}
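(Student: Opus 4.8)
The plan is to run the argument behind Theorem \ref{Theorem 2.2} with two adjustments dictated by the weaker hypotheses: the arbitrary idempotent of $\beta\mathbb{N}$ used there has to be replaced by a minimal one, and it is the minimality of the sequence that makes such a minimal idempotent available inside the sequence's tail semigroup. I would write $T=\bigcap_{m=1}^{\infty}\overline{FS\left(\left\langle x_{n}\right\rangle _{n=m}^{\infty}\right)}$. As in the usual proof of Hindman's Theorem, $T$ is a nonempty compact subsemigroup of $(\beta\mathbb{N},+)$, and the hypothesis that $\left\langle x_{n}\right\rangle _{n=1}^{\infty}$ is a minimal sequence is exactly the statement $T\cap K(\beta\mathbb{N})\neq\emptyset$. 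Hence, by the standard fact that a compact subsemigroup of $\beta\mathbb{N}$ meeting $K(\beta\mathbb{N})$ satisfies $K(T)=T\cap K(\beta\mathbb{N})$, any idempotent $p\in K(T)$ is an idempotent lying in $K(\beta\mathbb{N})$, that is, a minimal idempotent of $(\beta\mathbb{N},+)$. I fix such a $p$. Since $A$ is $\text{central}^{*}$ in $(\mathbb{N},+)$ we have $A\in p$, and since $p\in T$ we have $FS\left(\left\langle x_{n}\right\rangle _{n=m}^{\infty}\right)\in p$ for every $m\in\mathbb{N}$.

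Next I need the multiplicative input: $\pi^{-1}A:=\{m\in\mathbb{N}:\pi m\in A\}\in p$ for every $\pi\in\mathbb{N}$. For fixed $\pi$ the map $\lambda_{\pi}\colon n\mapsto\pi n$ is an injective endomorphism of $(\mathbb{N},+)$, so its continuous extension $\widetilde{\lambda_{\pi}}$ is a topological isomorphism of $(\beta\mathbb{N},+)$ onto the closed subsemigroup $\overline{\pi\mathbb{N}}$ of $\beta\mathbb{N}$; in particular $\pi\cdot p:=\widetilde{\lambda_{\pi}}(p)$ is a minimal idempotent of $\overline{\pi\mathbb{N}}$. Now $\pi\mathbb{N}$ is syndetic, hence piecewise syndetic, so $\overline{\pi\mathbb{N}}$ meets $K(\beta\mathbb{N})$ and therefore $K(\overline{\pi\mathbb{N}})=\overline{\pi\mathbb{N}}\cap K(\beta\mathbb{N})\subseteq K(\beta\mathbb{N})$; thus $\pi\cdot p$ is in fact a minimal idempotent of $(\beta\mathbb{N},+)$. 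Since $A$ is $\text{central}^{*}$ this gives $A\in\pi\cdot p=\widetilde{\lambda_{\pi}}(p)$, which is precisely $\pi^{-1}A\in p$. This step --- verifying that dilating a minimal additive idempotent by a positive integer keeps it minimal --- is the only point where the $\text{central}^{*}$ argument really differs from the $\text{IP}^{*}$ one (there $\pi\cdot p$ need only be seen to be an idempotent, which is immediate), and I expect it to be the main obstacle; the piecewise syndeticity of $\pi\mathbb{N}$ together with the behaviour of $K$ under the isomorphism $\widetilde{\lambda_{\pi}}$ is what makes it go through.

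With $p$ in hand the sum subsystem is built by the standard recursion. Put $A^{\star}=\{x\in A:-x+A\in p\}$, so that $A^{\star}\in p$ and $-x+A^{\star}\in p$ for every $x\in A^{\star}$. Suppose $y_{1},\dots,y_{n-1}$ together with $H_{1}<\dots<H_{n-1}$ in $\mathcal{P}_{f}(\mathbb{N})$ have been chosen so that $y_{i}=\sum_{t\in H_{i}}x_{t}$ and $\sum_{i\in F}y_{i}\in A^{\star}$ and $\prod_{i\in F}y_{i}\in A$ for every nonempty $F\subseteq\{1,\dots,n-1\}$ (for $n=1$ this is vacuous). Consider
\[
V=FS\!\left(\left\langle x_{t}\right\rangle_{t>\max H_{n-1}}\right)\cap A^{\star}\cap\bigcap_{\emptyset\neq F\subseteq\{1,\dots,n-1\}}\Bigl(-\!\sum_{i\in F}y_{i}+A^{\star}\Bigr)\cap\bigcap_{\emptyset\neq F\subseteq\{1,\dots,n-1\}}\Bigl(\prod_{i\in F}y_{i}\Bigr)^{-1}\!A .
\]
Each of the finitely many sets in this intersection belongs to $p$: the tail sum set because $p\in T$, the set $A^{\star}$ by construction, each $-\sum_{i\in F}y_{i}+A^{\star}$ because $\sum_{i\in F}y_{i}\in A^{\star}$, and each $\bigl(\prod_{i\in F}y_{i}\bigr)^{-1}A$ by the multiplicative input applied with $\pi=\prod_{i\in F}y_{i}$. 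Hence $V\in p$, so $V\neq\emptyset$; I pick $w\in V$, write $w=\sum_{t\in H_{n}}x_{t}$ with $\min H_{n}>\max H_{n-1}$ (possible since $w$ lies in the tail sum set), and set $y_{n}=w$. Splitting a nonempty $F\subseteq\{1,\dots,n\}$ according to whether $n\in F$, one checks directly from the defining conditions of $V$ that $\sum_{i\in F}y_{i}\in A^{\star}$ and $\prod_{i\in F}y_{i}\in A$ continue to hold. Carrying the recursion through all $n$ produces a sum subsystem $\left\langle y_{n}\right\rangle _{n=1}^{\infty}$ of $\left\langle x_{n}\right\rangle _{n=1}^{\infty}$ with $FS\left(\left\langle y_{n}\right\rangle _{n=1}^{\infty}\right)\subseteq A^{\star}\subseteq A$ and $FP\left(\left\langle y_{n}\right\rangle _{n=1}^{\infty}\right)\subseteq A$, which is exactly the conclusion of the theorem.
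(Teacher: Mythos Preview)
Your argument is correct. Note, however, that the paper does not itself prove Theorem~\ref{Theorem 2.4}; it is quoted from \cite{key-3}. The closest in-paper proof is that of the $\mathbb{Z}[i]$ analogue (part~(a) of the theorem following Lemma~\ref{Lemma 2.19}), and your proof matches that one in structure: choose a minimal idempotent $p$ in the tail semigroup, show that $\pi^{-1}A\in p$ for every $\pi$, and run the standard $A^{\star}$ recursion.

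The only noteworthy difference is in how the multiplicative input is packaged. You argue directly that $\pi\cdot p$ is a minimal idempotent of $(\beta\mathbb{N},+)$, using that $\widetilde{\lambda_{\pi}}$ is an isomorphism onto $\overline{\pi\mathbb{N}}$ and that $K(\overline{\pi\mathbb{N}})=\overline{\pi\mathbb{N}}\cap K(\beta\mathbb{N})$ because $\pi\mathbb{N}$ is piecewise syndetic; from $A\in\pi\cdot p$ you read off $\pi^{-1}A\in p$. The paper instead proves the dual statement as a separate lemma (Lemma~\ref{Lemma 2.18}(a) and then Lemma~\ref{Lemma 2.19}(a)): if $C$ is central then $zC$ is central, hence if $A$ is $\text{central}^{*}$ then $z^{-1}A$ is $\text{central}^{*}$, and therefore $z^{-1}A\in p$. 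The underlying computation with $K(\overline{z\mathbb{Z}[i]})$ in Lemma~\ref{Lemma 2.18}(a) is exactly your argument, so the two routes are equivalent; the paper's formulation has the minor advantage that ``$z^{-1}A$ is $\text{central}^{*}$'' is reusable independently of the particular idempotent $p$, while your formulation keeps everything inside a single proof.
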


The original Central Sets Theorem was proved by Furstenberg in \cite[Theorem 8.1]{key-5}.
Most general version of Central Sets Theorem was established in \cite{key-4.1}.
We state it here only for the case of a commutative semigroup.
\begin{thm}
Let $\left(S,+\right)$ be a commutative semigroup and let $\mathcal{T}=\,^{\mathbb{N}}S$,
Let $C$ be a central subset of $S$. Then there exist functions $\alpha:\mathcal{P}_{f}\left(\mathcal{T}\right)\rightarrow S$
and $H:\mathcal{P}_{f}\left(\mathcal{T}\right)\rightarrow\mathcal{P}_{f}\left(\mathbb{N}\right)$
such that
\begin{enumerate}
\item let $F,G\in\mathcal{P}_{f}\left(\mathcal{T}\right)$ and $F\subsetneq G$,
then $\max H\left(F\right)<\min H\left(G\right)$ and
\item whenever $m\in\mathbb{N}$, $G_{1},G_{2},....,G_{m}\in\mathcal{P}_{f}\left(\mathcal{T}\right)$
such that $G_{1}\subsetneq G_{2}\subsetneq......\subsetneq G_{m}$
and for each $i\in\left\{ 1,2,....,m\right\} $, $f_{i}\in G_{i}$
one has 
\[
\sum_{i=1}^{m}\left(\alpha\left(G_{i}\right)+\sum_{t\in H\left(G_{i}\right)}f_{i}(t)\right)\in C.
\]
\end{enumerate}
\end{thm}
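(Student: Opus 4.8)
The plan is to run the standard argument in $\beta S$. Since $C$ is central, I would first fix a minimal idempotent $p\in\beta S$ with $C\in p$ and pass to its starred refinement $C^{\star}=\{x\in C:-x+C\in p\}$; by the basic lemma on idempotent ultrafilters (see \cite{key-7}) we have $C^{\star}\in p$ and, for every $x\in C^{\star}$, also $-x+C^{\star}\in p$. I would then build $\alpha$ and $H$ by induction on $|F|$, $F\in\mathcal{P}_{f}(\mathcal{T})$, maintaining not only (1) but the following strengthening of (2): whenever $G_{1}\subsetneq\cdots\subsetneq G_{m}=F$ and $f_{i}\in G_{i}$ for each $i\leq m$, the sum $\sum_{i=1}^{m}\bigl(\alpha(G_{i})+\sum_{t\in H(G_{i})}f_{i}(t)\bigr)$ lies in $C^{\star}$. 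Since $C^{\star}\subseteq C$, this yields (2) once the induction is complete.

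For the inductive step, suppose $\alpha(G),H(G)$ have been chosen for all $G$ with $|G|<|F|$, obeying (1) and the strengthened (2). The only sums affected by declaring $\alpha(F),H(F)$ are those coming from chains $G_{1}\subsetneq\cdots\subsetneq G_{m}=F$ in which $F$ is the largest set; in such a chain every $G_{i}$ with $i<m$ is a proper subset of $F$, so there are only finitely many of them and only finitely many admissible choices of the $f_{i}\in G_{i}$. Hence the set $E$ of all prefix sums $\sum_{i=1}^{m-1}\bigl(\alpha(G_{i})+\sum_{t\in H(G_{i})}f_{i}(t)\bigr)$ is a finite subset of $C^{\star}$ by the inductive hypothesis; adjoin the empty sum to $E$. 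Put
\[
D\;=\;C^{\star}\cap\bigcap_{b\in E}\bigl(-b+C^{\star}\bigr)\in p,\qquad L=\max\{\max H(G):\emptyset\neq G\subsetneq F\}
\]
(and $L=0$ when $|F|=1$). Then it suffices to produce $a\in S$ and $H\in\mathcal{P}_{f}(\mathbb{N})$ with $\min H>L$ and $a+\sum_{t\in H}f(t)\in D$ for every $f\in F$: with $\alpha(F)=a$ and $H(F)=H$, condition (1) is immediate, and for each prefix $b\in E$ and each $f\in F$ we obtain $b+\bigl(a+\sum_{t\in H}f(t)\bigr)\in b+D\subseteq C^{\star}$, which is precisely the strengthened (2) for the new chains. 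The base case $|F|=1$ is the instance $E=\{\text{empty sum}\}$, $D=C^{\star}$.

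So the theorem reduces to the following lemma, which is where I expect the real work to lie: \emph{for every idempotent $p\in\beta S$, every $D\in p$, every $F\in\mathcal{P}_{f}({}^{\mathbb{N}}S)$ and every $L\in\mathbb{N}$ there exist $a\in S$ and $H\in\mathcal{P}_{f}(\mathbb{N})$ with $\min H>L$ such that $a+\sum_{t\in H}f(t)\in D$ for all $f\in F$ simultaneously} --- equivalently, that members of (minimal) idempotents, being piecewise syndetic, are ``$J$-sets''. The delicate point is that one $a$ and one $H$ must serve all of the finitely many sequences in $F$ at once, so a one-element $H=\{h\}$ need not work: $D$ controls none of the individual values $f(h)$, and the argument must genuinely use that $D$ lies in an idempotent. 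I would prove this lemma by the algebra of $\beta S$, the natural route being to construct $H=\{h_{1}<h_{2}<\cdots\}$ step by step alongside a descending sequence of members of $p$ that records which values of $a$ remain admissible, and to commit to $a$ only at the end; an alternative is to transfer the problem to the finite power $S^{|F|}$ with coordinatewise addition, where $D^{|F|}$ belongs to the idempotent induced from $p$ by the diagonal embedding. Granting this lemma, the induction closes; reading off the strengthened form of (2) with $m$ arbitrary and $G_{m}$ at the top of the chain delivers (2), and (1) holds by construction, which completes the proof.
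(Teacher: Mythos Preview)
The paper does not give its own proof of this theorem; it is quoted from \cite{key-4.1} as background, so there is no in-paper argument to compare against. That said, your outer scaffolding---fix a minimal idempotent $p$ with $C\in p$, pass to $C^{\star}$, and define $\alpha(F),H(F)$ by recursion on $|F|$ while maintaining that every chain-sum terminating at $F$ lies in $C^{\star}$---is exactly the architecture of the proof in \cite{key-4.1}, and your reduction of the inductive step to a single one-shot lemma is correct.

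The gap is in the lemma. As you phrase it (``for every idempotent $p\in\beta S$, every $D\in p$ \ldots''), it is false: there exist idempotents in $\beta\mathbb{N}$ whose members are not $J$-sets, so any argument that uses only the relation $p+p=p$ would prove too much. What is true, and what your parenthetical correctly names, is that every member of a \emph{minimal} idempotent---equivalently every piecewise syndetic set---is a $J$-set. Neither of your two sketches establishes this. The ``descending members of $p$'' route invokes only the idempotent identity, not minimality, and hence cannot succeed; and the diagonal embedding into $S^{|F|}$ does not automatically carry a minimal idempotent of $\beta S$ to a minimal idempotent of $\beta(S^{|F|})$, nor does it place $D^{|F|}$ in one. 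In \cite{key-4.1} and \cite{key-7} this step is supplied by the Hales--Jewett theorem (or an equivalent finitary colouring argument); once that combinatorial input is in hand, your induction closes exactly as you wrote and the theorem follows.
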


In \cite{key-4.1} the sets which satisfy the conclusion of the latest
Central Sets Theorem are called $C$-sets.
\begin{defn}
Let $\left(S,+\right)$ be a commutative semigroup and let $A\subseteq S$.
Then $A$ is called a $C$-set if and only if there exist functions
$\alpha:\mathcal{P}_{f}\left(^{\mathbb{N}}S\right)\rightarrow S$
and $H:\mathcal{P}_{f}\left(^{\mathbb{N}}S\right)\rightarrow\mathcal{P}_{f}\left(\mathbb{N}\right)$
such that 
\begin{enumerate}
\item let $F,G\in\mathcal{P}_{f}\left(^{\mathbb{N}}S\right)$ and $F\subsetneq G$,
then $\max H\left(F\right)<\min H\left(G\right)$,
\item whenever $m\in\mathbb{N}$, $G_{1},G_{2},....,G_{m}\in\mathcal{P}_{f}\left(^{\mathbb{N}}S\right)$
such that $G_{1}\subsetneq G_{2}\subsetneq......\subsetneq G_{r}$
and for each $i\in\left\{ 1,2,....,r\right\} $, $f_{i}\in G_{i}$
one has 
\[
\sum_{i=1}^{m}\left(\alpha\left(G_{i}\right)+\sum_{t\in H\left(G_{i}\right)}f_{i}\left(t\right)\right)\in A.
\]
\end{enumerate}
\end{defn}

We now recall some definition from \cite{key-4.1}.
\begin{defn}
Let $\left(S,+\right)$ be a commutative semigroup and let $\mathcal{T}=\,^{\mathbb{N}}S$
be the set of all sequences in $S$.
\begin{enumerate}
\item A subset $A$ of $S$ is said to be a $J$-set if for every $F\in\mathcal{P}_{f}\left(\mathcal{T}\right)$
there exist $a\in S$ and $H\in\mathcal{P}_{f}\left(\mathbb{N}\right)$
such that for all $f\in F$, $a+\sum_{t\in H}f\left(t\right)\in A$.
\item $J\left(S\right)=\left\{ p\in\beta S:\left(\forall A\in p\right)\left(A\text{ is a }J\text{-set}\right)\right\} .$
\end{enumerate}
\end{defn}

The alternative description of $C$-set and $J$-set follows from
the following results:
\begin{thm}
\label{Theorem 2.8}(i) Let $\left(S,+\right)$ be a discrete commutative
semigroup and $A$ be a subset of $S$. Then $A$ is a $J$-set if
and only if $J\left(S\right)\cap\overline{A}\neq\emptyset$.

(ii) Let $\left(S,+\right)$ be a commutative semigroup and let $\mathcal{T}=\,^{\mathbb{N}}S$
be the set of all sequences in $S$, and let $A\subseteq S$. Then
$A$ is a $C$-set if and only if there is an idempotent $p\in\overline{A}\cap J\left(S\right)$.
\end{thm}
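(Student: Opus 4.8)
The plan is to establish the two equivalences separately. In each, the ``if'' implication is immediate: if $p\in J(S)\cap\overline A$ then $A\in p$ and every member of $p$ is a $J$-set, so $A$ is a $J$-set; and if moreover $p$ is idempotent, the inductive construction described in the last paragraph below shows $A$ is a $C$-set. Hence all the content lies in the two ``only if'' directions.

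For (i), the heart of the matter --- and what I expect to be the main obstacle --- is the purely combinatorial fact that the family $\mathcal J$ of $J$-sets of $S$ is \emph{partition regular}: if $A\in\mathcal J$ and $A=A_1\cup\dots\cup A_n$ then some $A_i\in\mathcal J$. The trap here is that simply feeding the $J$-set property of $A$ the union $F_1\cup\dots\cup F_n$ of families $F_i$ that witness ``$A_i$ is not a $J$-set'' does not yield a contradiction; one must instead present $A$ with a carefully enlarged family assembled from shifted and concatenated copies of the $F_i$ and then run a Ramsey/pigeonhole argument over the $n$ cells to force a witness into some $A_i$. Granting partition regularity (and noting that $\mathcal J$ is trivially closed upwards), the ultrafilter is produced by compactness: were $\overline A$ covered by the basic closed sets $\overline B$ with $B\notin\mathcal J$, finitely many $\overline{B_1},\dots,\overline{B_n}$ would cover it, giving $A\subseteq B_1\cup\dots\cup B_n$; then $A=\bigcup_i(A\cap B_i)$ would put some $A\cap B_i$, hence $B_i$, into $\mathcal J$ --- absurd. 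So some $p\in\overline A$ has all its members $J$-sets, i.e.\ $p\in J(S)\cap\overline A$.

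For the ``only if'' of (ii), let $A$ be a $C$-set witnessed by $\alpha$ and $H$, and for $F\in\mathcal P_f(\mathcal T)$ let $B_F$ be the set of all sums $\sum_{i=1}^m\bigl(\alpha(G_i)+\sum_{t\in H(G_i)}f_i(t)\bigr)$ taken over chains $F\subsetneq G_1\subsetneq\dots\subsetneq G_m$ with $f_i\in G_i$. By the defining property of a $C$-set each $B_F$ is a nonempty subset of $A$, and clearly $B_{F_1\cup F_2}\subseteq B_{F_1}\cap B_{F_2}$, so $E:=\bigcap_F\overline{B_F}$ is a nonempty closed subset of $\overline A$. First I would check that $E$ is a subsemigroup of $\beta S$: for $p,q\in E$ and any $F$, if $x\in B_F$ is exhibited along a chain with top $G_m$, then appending any chain above $G_m$ (legal by clause (1) of the definition) shows $B_{G_m}\subseteq -x+B_F$, so $-x+B_F\in q$; hence $B_F\subseteq\{x:-x+B_F\in q\}\in p$, i.e.\ $B_F\in p+q$. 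Next, a single-step chain $F\subsetneq G$ with a prescribed finite family of sequences inside $G$ shows each $B_F$ is itself a $J$-set, so $\overline{B_F}\cap J(S)\neq\emptyset$ by (i); since the $B_F$ are directed downward, $E\cap J(S)\neq\emptyset$. Finally $J(S)$ is a left ideal of $\beta S$ (if $p\in J(S)$ and $C\in r+p$, then $-x+C\in p$ for some $x$, so $-x+C$, and therefore $C$, is a $J$-set), so $E\cap J(S)$ is a nonempty closed subsemigroup of $\beta S$; by Ellis's theorem it contains an idempotent, which lies in $\overline A\cap J(S)$.

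It remains to give the construction used for the ``if'' direction of (ii). Given an idempotent $p\in\overline A\cap J(S)$, define $\alpha(F)\in S$ and $H(F)\in\mathcal P_f(\mathbb N)$ by induction on $|F|$ so that, writing $A^\star=\{x\in A:-x+A\in p\}$, every chain-sum whose top chain-member is $F$ lies in $A^\star$, and $\max H(F')<\min H(F)$ whenever $\emptyset\neq F'\subsetneq F$. At stage $F$ there are only finitely many chains lying strictly below $F$ and only finitely many tuples selectable from their members, so only finitely many partial sums $s\in A^\star$ have been produced; put $D=A^\star\cap\bigcap_s(-s+A^\star)$, which lies in $p$ (each $-s+A^\star\in p$ because $s\in A^\star$ and $p$ is idempotent) and is therefore a $J$-set. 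Applying the $J$-set property to the family $F$ --- and using the standard fact that its block may be chosen with arbitrarily large minimum --- we get $a$ and $H$ with $a+\sum_{t\in H}f(t)\in D$ for all $f\in F$ and $\min H$ past every relevant $\max H(F')$; set $\alpha(F)=a$ and $H(F)=H$. The relations $-s+A^\star\in p$ then send every chain-sum back into $A^\star\subseteq A$, so $\alpha$ and $H$ witness that $A$ is a $C$-set.
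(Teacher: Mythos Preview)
The paper does not actually prove Theorem~2.8; it is quoted without proof as a known characterization, the two parts being due respectively to \cite{key-4.1} (partition regularity of $J$-sets and the ideal $J(S)$) and \cite{key-8} (the idempotent characterization of $C$-sets). So there is nothing in the paper to compare your argument to.

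Your sketch is correct and follows the standard route taken in those references. A few remarks. For (i), you rightly identify partition regularity of $\mathcal J$ as the crux and explicitly leave it unproved; this is indeed the nontrivial lemma (it is \cite[Theorem~3.3]{key-4.1}), and once it is granted your compactness argument is the standard one. For the ``only if'' of (ii), your sets $B_F$, the verification that $E=\bigcap_F\overline{B_F}$ is a subsemigroup via the chain-extension trick, the observation that each $B_F$ is itself a $J$-set by a single chain step $F\subsetneq F\cup G$, and the use of Ellis on $E\cap J(S)$ are exactly the argument of \cite{key-8}. Your claim that $J(S)$ is a left ideal is correct (and in fact $J(S)$ is a closed two-sided ideal, \cite[Theorem~3.5]{key-4.1}); either suffices to make $E\cap J(S)$ a compact subsemigroup. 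For the ``if'' of (ii), your recursive construction of $\alpha$ and $H$ is the standard proof of the Central Sets Theorem for idempotents in $J(S)$; the appeal to ``$\min H$ arbitrarily large'' is legitimate and is obtained by shifting the sequences in $F$. One cosmetic point: ``induction on $|F|$'' is slightly loose since there are many $F$ of each size, but the recursion is well-founded because $\alpha(F),H(F)$ depend only on the finitely many proper subsets of $F$.
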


We have already discussed $\text{IP}^{*}$ and $\text{central}^{*}$
sets before. Now we need to introduce the notion of $C^{*}$-set.
\begin{defn}
Let $\left(S,+\right)$ be a discrete commutative semigroup. A set
$A\subseteq S$ is said to be a $C^{*}$-set if it belongs to all
the idempotents of $J\left(S\right)$.
\end{defn}

It is clear from the definition of $C^{*}$-set that 
\[
\text{IP}^{*}\text{-set}\Rightarrow C^{*}\text{-set}\Rightarrow\text{central}^{*}\text{-set}.
\]

Its obvious that $C^{*}$-sets contain finite sum and product for
minimal sequences but not for any sequences. In \cite{key-4} author
shows that $C^{*}$-sets contain similar configuration of more general
form than minimal sequences.
\begin{defn}
Let $\left(S,+\right)$ be a discrete commutative semigroup. Then
a sequence $\left\langle x_{n}\right\rangle _{n=1}^{\infty}$ is said
to be an almost minimal sequence if 
\[
\bigcap_{m=1}^{\infty}\overline{FS\left(\left\langle x_{n}\right\rangle _{n=m}^{\infty}\right)}\cap J\left(S\right)\neq\emptyset.
\]
\end{defn}

The author in \cite{key-4} gave an example of an almost minimal sequence
in $\mathbb{Z}$ which is not minimal and proved:
\begin{thm}
Let $\left\langle x_{n}\right\rangle _{n=1}^{\infty}$ be an almost
minimal sequence and $A$ be a $C^{*}$-set in $\left(\mathbb{N},+\right)$.
Then there exists a sum subsystem $\left\langle y_{n}\right\rangle _{n=1}^{\infty}$
of $\left\langle x_{n}\right\rangle _{n=1}^{\infty}$ such that 
\[
FS\left(\left\langle y_{n}\right\rangle _{n=1}^{\infty}\right)\cup FP\left(\left\langle y_{n}\right\rangle _{n=1}^{\infty}\right)\subseteq A.
\]
\end{thm}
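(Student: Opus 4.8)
The plan is to run the Galvin--Glazer style inductive construction used for Theorem~\ref{Theorem 2.2} and Theorem~\ref{Theorem 2.4}, with $J\left(\mathbb{N}\right)$ playing the role that the smallest ideal $K\left(\beta\mathbb{N}\right)$ plays there. First I would fix a good idempotent. Since $\left\langle x_{n}\right\rangle _{n=1}^{\infty}$ is an almost minimal sequence, $T=\bigcap_{m=1}^{\infty}\overline{FS\left(\left\langle x_{n}\right\rangle _{n=m}^{\infty}\right)}$ meets $J\left(\mathbb{N}\right)$. Now $T$ is a closed subsemigroup of $\left(\beta\mathbb{N},+\right)$ (see \cite{key-7}) and $J\left(\mathbb{N}\right)$ is a closed two-sided ideal of $\left(\beta\mathbb{N},+\right)$ (see \cite{key-4.1}), so $T\cap J\left(\mathbb{N}\right)$ is a nonempty compact subsemigroup of $\left(\beta\mathbb{N},+\right)$ and therefore contains an idempotent $p$. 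Since $A$ is a $C^{*}$-set and $p$ is an idempotent of $J\left(\mathbb{N}\right)$ we have $A\in p$, and since $p\in T$ we have $FS\left(\left\langle x_{n}\right\rangle _{n=m}^{\infty}\right)\in p$ for every $m$.

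The second ingredient is that the class of $C^{*}$-sets is closed under division: for each $d\in\mathbb{N}$ the set $A/d=\left\{ x\in\mathbb{N}:dx\in A\right\} $ is again a $C^{*}$-set, and hence $A/d\in p$. This should go exactly as the corresponding facts for $\text{IP}^{*}$- and $\text{central}^{*}$-sets (cf.\ \cite{key-2,key-3}): the dilation $\lambda_{d}\colon q\mapsto d\cdot q$ is a continuous homomorphism of $\left(\beta\mathbb{N},+\right)$ that maps $J\left(\mathbb{N}\right)$ into itself, hence sends every idempotent of $J\left(\mathbb{N}\right)$ to an idempotent of $J\left(\mathbb{N}\right)$; since $A$ belongs to all such idempotents we obtain $A\in d\cdot q$, that is $A/d\in q$, for every idempotent $q$ of $J\left(\mathbb{N}\right)$. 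I expect this closure statement --- equivalently, the invariance of $J\left(\mathbb{N}\right)$ under the dilations $q\mapsto d\cdot q$ --- to be the only genuinely technical point; the rest is bookkeeping with $p$.

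With $p$ fixed, set $A^{\star}=\left\{ a\in A:-a+A\in p\right\} $; since $A\in p=p+p$ one has $A^{\star}\in p$ and $-a+A^{\star}\in p$ for every $a\in A^{\star}$. I would construct $H_{1}<H_{2}<\cdots$ in $\mathcal{P}_{f}\left(\mathbb{N}\right)$ and set $y_{n}=\sum_{t\in H_{n}}x_{t}$, maintaining the inductive hypothesis that $\sum_{i\in F}y_{i}\in A^{\star}$ and $\prod_{i\in F}y_{i}\in A$ for every nonempty $F\subseteq\left\{ 1,\ldots,n\right\} $. At the inductive step, having chosen $y_{1},\ldots,y_{n}$, one considers
\[
V_{n}=\left(\bigcap_{F\subseteq\left\{ 1,\ldots,n\right\} }\left(-\sum_{i\in F}y_{i}+A^{\star}\right)\right)\cap\left(\bigcap_{F\subseteq\left\{ 1,\ldots,n\right\} }\frac{A}{\prod_{i\in F}y_{i}}\right)\cap FS\left(\left\langle x_{t}\right\rangle _{t>\max H_{n}}\right).
\]
Each set in the first intersection is in $p$ (it is $A^{\star}$ when $F=\emptyset$, and $-\sum_{i\in F}y_{i}+A^{\star}\in p$ when $F\neq\emptyset$, by the property of $A^{\star}$ and the inductive hypothesis); each set in the second intersection is in $p$ because $A/\prod_{i\in F}y_{i}$ is a $C^{*}$-set; and $FS\left(\left\langle x_{t}\right\rangle _{t>\max H_{n}}\right)\in p$ because $p\in T$. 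Hence $V_{n}\in p$, so $V_{n}\neq\emptyset$; choosing $y_{n+1}\in V_{n}$ forces $y_{n+1}=\sum_{t\in H_{n+1}}x_{t}$ for some $H_{n+1}$ with $\min H_{n+1}>\max H_{n}$, and splitting a nonempty $F\subseteq\left\{ 1,\ldots,n+1\right\} $ according to whether $n+1\in F$ shows the inductive hypothesis is preserved; the base step takes $y_{1}\in A^{\star}\cap FS\left(\left\langle x_{t}\right\rangle _{t=1}^{\infty}\right)$. Then $\left\langle y_{n}\right\rangle _{n=1}^{\infty}$ is a sum subsystem of $\left\langle x_{n}\right\rangle _{n=1}^{\infty}$ with $FS\left(\left\langle y_{n}\right\rangle _{n=1}^{\infty}\right)\subseteq A^{\star}\subseteq A$ and $FP\left(\left\langle y_{n}\right\rangle _{n=1}^{\infty}\right)\subseteq A$, as required. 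The almost-minimality hypothesis is used only to make $T\cap J\left(\mathbb{N}\right)$ nonempty, and all the work beyond the $\text{IP}^{*}$ case of Theorem~\ref{Theorem 2.2} is concentrated in the division lemma above.
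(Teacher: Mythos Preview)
The paper does not give its own proof of this statement for $(\mathbb{N},+)$; it is quoted as a result of De \cite{key-4}. However, the paper does prove the analogous theorem for $(\mathbb{Z}[i],+)$ (part~(b) of the final theorem of Section~2), and your proposal matches that proof essentially line for line: one fixes an idempotent $p$ in $\bigcap_{m}\overline{FS\left(\left\langle x_{n}\right\rangle_{n=m}^{\infty}\right)}\cap J$ (the paper obtains this via Theorem~\ref{Theorem 2.13}, you via the observation that $T\cap J$ is a nonempty compact subsemigroup), invokes the fact that $s^{-1}A$ is again a $C^{*}$-set so that $s^{-1}A\in p$ for every $s$, and then runs the Galvin--Glazer induction with the set
\[
D=B\cap A^{\star}\cap\bigcap_{s\in E_{1}}\left(-s+A^{\star}\right)\cap\bigcap_{s\in E_{2}}s^{-1}A^{\star},
\]
which is exactly your $V_{n}$ up to using $A^{\star}$ rather than $A$ in the multiplicative shifts (an immaterial difference for the stated conclusion). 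Your identification of the dilation-invariance of $J$ as the one substantive lemma is apt; in the paper this is precisely the content of Lemma~\ref{Lemma 2.18}(b), from which Lemma~\ref{Lemma 2.19}(b) (your ``division lemma'') follows, and your sketch of it via the additive homomorphism $q\mapsto d\cdot q$ is the same mechanism the paper uses.
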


\begin{fact}
\label{Fact 2.12}Let $\left\langle x_{n}\right\rangle _{n=1}^{\infty}$be
an almost minimal sequence in $\mathbb{Z}$ which is not minimal.
Then $FS\left(\left\langle x_{n}\right\rangle _{n=1}^{\infty}\right)$
is a $J$-set in $\mathbb{Z}$ by \cite[Theorem 2.7]{key-4} and as
the product of $J$-sets is again a $J$-set in finite product space
by \cite[Theorem 2.11]{key-9} we have $FS\left(\left\langle x_{n}\right\rangle _{n=1}^{\infty}\right)\times FS\left(\left\langle x_{n}\right\rangle _{n=1}^{\infty}\right)$
is a $J$-set in $\mathbb{Z}\left[i\right]$ (as $\mathbb{Z}^{2}$
and $\mathbb{Z}\left[i\right]$ are isomorphic under addition). Now
suppose $\left\langle z_{m}\right\rangle _{m=1}^{\infty}$ in $\mathbb{Z}\left[i\right]$
as 
\[
z_{m}=\left\{ \begin{array}{c}
x_{\frac{m+1}{2}}\text{ where }m\text{ is odd}\\
ix_{\frac{m}{2}}\text{where }m\text{ is even}
\end{array}\right..
\]
 Then $\bigcap_{m=1}^{\infty}\overline{FS\left(\left\langle z_{n}\right\rangle _{n=m}^{\infty}\right)}\cap J\left(\mathbb{Z}\left[i\right]\right)\neq\emptyset$
as 
\[
FS\left(\left\langle z_{n}\right\rangle _{n=1}^{\infty}\right)\supseteq FS\left(\left\langle x_{n}\right\rangle _{n=1}^{\infty}\right)\times FS\left(\left\langle x_{n}\right\rangle _{n=1}^{\infty}\right)
\]
 and $\left\langle z_{m}\right\rangle _{m=1}^{\infty}$ provides an
almost minimal sequence in $\mathbb{Z}\left[i\right]$ which is not
minimal. We can also construct a minimal sequence in $\mathbb{Z}\left[i\right]$
with the help of a minimal sequence in $\mathbb{Z}$ by the similar
construction.
\end{fact}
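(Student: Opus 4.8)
The plan is to verify, in turn, the three assertions that Fact~\ref{Fact 2.12} bundles together: that $FS(\langle x_{n}\rangle_{n=1}^{\infty})\times FS(\langle x_{n}\rangle_{n=1}^{\infty})$ is a $J$-set in $\mathbb{Z}[i]$, that the interleaved sequence $\langle z_{m}\rangle_{m=1}^{\infty}$ is an almost minimal sequence in $\mathbb{Z}[i]$, and that it is not minimal; the final sentence of the Fact, concerning minimal sequences, will then follow by the same recipe with $J(S)$ replaced everywhere by the smallest ideal $K(\beta S)$.

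First I would record that a tail of an almost minimal sequence is again almost minimal, since the defining intersection $\bigcap_{m}\overline{FS(\langle x_{n}\rangle_{n=m}^{\infty})}\cap J(S)$ runs over a decreasing family and so is unchanged by deleting finitely many initial terms. Hence, by \cite[Theorem 2.7]{key-4} together with Theorem~\ref{Theorem 2.8}(i), each tail $FS(\langle x_{n}\rangle_{n=\ell}^{\infty})$ is a $J$-set in $\mathbb{Z}$. Invoking \cite[Theorem 2.11]{key-9}, a finite Cartesian product of $J$-sets is a $J$-set in the product semigroup; transporting this statement along the additive isomorphism $\mathbb{Z}^{2}\cong\mathbb{Z}[i]$, $(a,b)\mapsto a+bi$, shows that $FS(\langle x_{n}\rangle_{n=\ell}^{\infty})\times FS(\langle x_{n}\rangle_{n=\ell}^{\infty})$ is a $J$-set in $\mathbb{Z}[i]$ for every $\ell\in\mathbb{N}$.

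Next, for almost minimality of $\langle z_{m}\rangle$, I would first establish the inclusion $FS(\langle z_{n}\rangle_{n=m}^{\infty})\supseteq FS(\langle x_{n}\rangle_{n=\ell}^{\infty})\times FS(\langle x_{n}\rangle_{n=\ell}^{\infty})$ with $\ell=\lceil m/2\rceil$: any finite sum $\sum_{j\in F}z_{j}$ with $F\subseteq\{m,m+1,\dots\}$ splits, according to the parity of the indices in $F$, into a real part lying in $FS(\langle x_{n}\rangle_{n=\ell}^{\infty})$ and an imaginary part lying in $i\cdot FS(\langle x_{n}\rangle_{n=\ell}^{\infty})$, and the odd- and even-indexed blocks of $F$ may be chosen independently. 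Since every superset of a $J$-set is a $J$-set, it follows from the previous paragraph that $FS(\langle z_{n}\rangle_{n=m}^{\infty})$ is a $J$-set, whence $\overline{FS(\langle z_{n}\rangle_{n=m}^{\infty})}\cap J(\mathbb{Z}[i])\neq\emptyset$ by Theorem~\ref{Theorem 2.8}(i). These sets are closed in $\beta\mathbb{Z}[i]$ (both $\overline{(\cdot)}$ and $J(\mathbb{Z}[i])$ are closed) and decrease with $m$, so by compactness of $\beta\mathbb{Z}[i]$ their intersection over all $m$ is nonempty; this is precisely the statement that $\langle z_{m}\rangle$ is almost minimal.

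Finally, for non-minimality, let $\tilde{\pi}:\beta\mathbb{Z}[i]\to\beta\mathbb{Z}$ be the continuous homomorphic extension of the real-part projection $\pi(a+bi)=a$. If there were $p\in\bigcap_{m}\overline{FS(\langle z_{n}\rangle_{n=m}^{\infty})}\cap K(\beta\mathbb{Z}[i])$, then $\tilde{\pi}(p)\in K(\beta\mathbb{Z})$ (a continuous surjective homomorphism carries the smallest ideal into the smallest ideal; see \cite{key-7}), so $\tilde{\pi}(p)$ is nonprincipal, while $\tilde{\pi}(p)\in\overline{\pi(FS(\langle z_{n}\rangle_{n=m}^{\infty}))}\subseteq\overline{FS(\langle x_{n}\rangle_{n=\ell}^{\infty})\cup\{0\}}=\overline{FS(\langle x_{n}\rangle_{n=\ell}^{\infty})}$, the last equality holding because $\{0\}\notin\tilde{\pi}(p)$. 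Thus $\tilde{\pi}(p)\in\bigcap_{\ell}\overline{FS(\langle x_{n}\rangle_{n=\ell}^{\infty})}\cap K(\beta\mathbb{Z})$, contradicting the hypothesis that $\langle x_{n}\rangle$ is not minimal. I expect the main obstacle to be this last point: the Fact merely asserts non-minimality of $\langle z_{m}\rangle$, and making it rigorous requires the projection argument above together with the standard fact that $K(\beta\mathbb{Z})$ contains no principal ultrafilter, which is what lets one discard the spurious $0$ coming from finite sums supported on even indices only. The minimal-sequence version is handled identically: tails of a minimal sequence in $\mathbb{Z}$ are piecewise syndetic, their products are piecewise syndetic in $\mathbb{Z}[i]$, and the same compactness argument yields $\bigcap_{m}\overline{FS(\langle z_{n}\rangle_{n=m}^{\infty})}\cap K(\beta\mathbb{Z}[i])\neq\emptyset$.
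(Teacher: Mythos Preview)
Your argument is correct and follows the same route the paper sketches inside the statement of Fact~\ref{Fact 2.12}: show the Cartesian product of the $FS$-sets is a $J$-set via \cite[Theorem 2.11]{key-9}, observe that this product sits inside $FS(\langle z_n\rangle)$, and conclude almost minimality. You have filled in two points the paper leaves implicit. First, the paper only exhibits the containment for $m=1$ and simply asserts the full intersection $\bigcap_{m}\overline{FS(\langle z_{n}\rangle_{n=m}^{\infty})}\cap J(\mathbb{Z}[i])\neq\emptyset$; in effect it is tacitly relying on the equivalence of Theorem~\ref{Theorem 2.13} (stated immediately afterward), whereas you instead argue directly that each tail contains a product of tails and then invoke compactness. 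Second, the paper asserts without proof that $\langle z_m\rangle$ is not minimal; your projection argument via the real-part map $\pi$ and the fact that $\tilde{\pi}[K(\beta\mathbb{Z}[i])]\subseteq K(\beta\mathbb{Z})$ is exactly what is needed to make this rigorous, and your handling of the stray $0$ (arising when $F$ contains only even indices) is correct since $K(\beta\mathbb{Z})$ contains no principal ultrafilter.

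One small indexing slip: the inclusion $FS(\langle z_{n}\rangle_{n=m}^{\infty})\supseteq FS(\langle x_{n}\rangle_{n=\ell}^{\infty})\times FS(\langle x_{n}\rangle_{n=\ell}^{\infty})$ requires $\ell=\lceil (m+1)/2\rceil$ rather than $\lceil m/2\rceil$ (for even $m=2\ell$ the term $z_{2\ell-1}=x_{\ell}$ is missing from the tail, so the real part cannot hit $x_{\ell}$). This is harmless for the argument, since all that matters is that some $\ell\to\infty$ works.
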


\begin{thm}
\label{Theorem 2.13}In the semigroup $\left(\mathbb{Z}\left[i\right],+\right)$,
the following conditions are equivalent:
\begin{enumerate}
\item $\left\langle x_{n}\right\rangle _{n=1}^{\infty}$ is an almost minimal
sequence.
\item $FS\left(\left\langle x_{n}\right\rangle _{n=1}^{\infty}\right)$
is a $J$-set.
\item there is an idempotent in $\bigcap_{m=1}^{\infty}\overline{FS\left(\left\langle x_{n}\right\rangle _{n=m}^{\infty}\right)}\cap J\left(\mathbb{Z}\left[i\right]\right)\neq\emptyset$.
\end{enumerate}
\end{thm}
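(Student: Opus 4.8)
The plan is to prove the cycle $(1)\Rightarrow(3)\Rightarrow(2)\Rightarrow(1)$. Throughout write $T_m=\overline{FS\left(\langle x_n\rangle_{n=m}^\infty\right)}\subseteq\beta\mathbb{Z}[i]$, so that $(1)$ says $\bigcap_{m=1}^\infty T_m\cap J(\mathbb{Z}[i])\neq\emptyset$ and $(3)$ says this same set contains an idempotent; in particular $(3)\Rightarrow(1)$ is trivial, and $(3)\Rightarrow(2)$ is almost as cheap: if an idempotent lies in $\bigcap_{m=1}^\infty T_m\cap J(\mathbb{Z}[i])$, then it lies in $T_1\cap J(\mathbb{Z}[i])=\overline{FS\left(\langle x_n\rangle_{n=1}^\infty\right)}\cap J(\mathbb{Z}[i])$, so that set is nonempty and Theorem \ref{Theorem 2.8}(i) makes $FS\left(\langle x_n\rangle_{n=1}^\infty\right)$ a $J$-set.

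For $(1)\Rightarrow(3)$ the point is that $\bigcap_{m=1}^\infty T_m\cap J(\mathbb{Z}[i])$ is an intersection of two closed subsemigroups of $(\beta\mathbb{Z}[i],+)$. That $\bigcap_{m=1}^\infty T_m$ is a compact subsemigroup is the standard verification: if $p,q$ belong to it and $a=\sum_{t\in F}x_t$ with $\min F\ge m$, then $FS\left(\langle x_n\rangle_{n=\max F+1}^\infty\right)\subseteq -a+FS\left(\langle x_n\rangle_{n=m}^\infty\right)$, which forces $FS\left(\langle x_n\rangle_{n=m}^\infty\right)\in p+q$; and $J(\mathbb{Z}[i])$ is a closed subsemigroup of $\beta\mathbb{Z}[i]$ by \cite{key-4.1}. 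Being nonempty by hypothesis, the intersection is then a nonempty compact right topological semigroup, hence contains an idempotent, which is $(3)$.

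The substantive implication is $(2)\Rightarrow(1)$, whose heart is: if $FS\left(\langle x_n\rangle_{n=1}^\infty\right)$ is a $J$-set, then so is every tail $FS\left(\langle x_n\rangle_{n=m}^\infty\right)$. Here I would use the exact finite decomposition, with $s_F=\sum_{t\in F}x_t$,
\[
FS\left(\langle x_n\rangle_{n=1}^\infty\right)=FS\left(\langle x_n\rangle_{n=m}^\infty\right)\cup\bigcup_{\emptyset\neq F\subseteq\{1,\dots,m-1\}}\left(\{s_F\}\cup\left(s_F+FS\left(\langle x_n\rangle_{n=m}^\infty\right)\right)\right).
\]
Since $\overline{A\cup B}=\overline A\cup\overline B$ and $J(\mathbb{Z}[i])$ is fixed, Theorem \ref{Theorem 2.8}(i) shows that a finite union of sets, none of which is a $J$-set, is itself not a $J$-set; so one of the displayed pieces is a $J$-set. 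Singletons are not $J$-sets (apply the definition of $J$-set to the two sequences $n\mapsto n$ and $n\mapsto 2n$), and because $\mathbb{Z}[i]$ is a group, $J$-sets are invariant under translation by every element of $\mathbb{Z}[i]$; hence in every case $FS\left(\langle x_n\rangle_{n=m}^\infty\right)$ is a $J$-set. By Theorem \ref{Theorem 2.8}(i) we get $T_m\cap J(\mathbb{Z}[i])\neq\emptyset$ for all $m$, and since these are nonempty closed subsets of the compact space $\beta\mathbb{Z}[i]$ nested decreasingly, their intersection $\bigcap_{m=1}^\infty T_m\cap J(\mathbb{Z}[i])$ is nonempty --- which is $(1)$.

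I expect the only real obstacle to be this last implication, and within it the two elementary facts about $J$-sets in $\mathbb{Z}[i]$ --- translation invariance and the behaviour under finite unions --- since this is where the group (not merely semigroup) structure of $\mathbb{Z}[i]$ is genuinely used to pass from the single $J$-set $FS\left(\langle x_n\rangle_{n=1}^\infty\right)$ to all of its tails. Everything else is routine manipulation with the $\beta\mathbb{Z}[i]$ machinery from the Introduction together with the characterization in Theorem \ref{Theorem 2.8}.
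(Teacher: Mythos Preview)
Your argument is correct and follows essentially the same route as the paper's, with only cosmetic organizational differences. The paper proves $(1)\Rightarrow(2)$ (obvious) and then $(2)\Rightarrow(3)$ directly, while you factor this as $(2)\Rightarrow(1)\Rightarrow(3)$; in the decomposition step the paper picks a specific $p\in\overline{FS(\langle x_n\rangle_{n=1}^\infty)}\cap J(\mathbb{Z}[i])$ and tracks which piece of the partition lies in $p$, whereas you invoke the partition regularity of $J$-sets via $\overline{A\cup B}=\overline A\cup\overline B$ --- both routes rest on the same two ingredients you isolate (translation invariance of $J$-sets in the group $\mathbb{Z}[i]$, and that $\bigcap_m T_m$ and $J(\mathbb{Z}[i])$ are closed subsemigroups).
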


\begin{proof}
(1)$\Rightarrow$(2) is obvious from the definition. Now, for (2)$\Rightarrow$(3)
Let $FS\left(\left\langle x_{n}\right\rangle _{n=1}^{\infty}\right)$
be a J-set. Then by Theorem \ref{Theorem 2.8}, we have $\overline{FS\left(\left\langle x_{n}\right\rangle _{n=1}^{\infty}\right)}\cap J\left(\mathbb{Z}\left[i\right]\right)\neq\emptyset$.
We choose $p\in\overline{FS\left(\left\langle x_{n}\right\rangle _{n=1}^{\infty}\right)}\cap J\left(\mathbb{Z}\left[i\right]\right)$.
By \cite[Lemma 5.11]{key-7}, $\bigcap_{m=1}^{\infty}\overline{FS\left(\left\langle x_{n}\right\rangle _{n=m}^{\infty}\right)}$
is a subsemigroup of $\beta\mathbb{Z}\left[i\right]$. As a consequence
of \cite[Theorem 3.5]{key-4.1}, it follows that $J\left(\mathbb{Z}\left[i\right]\right)$
is a subsemigroup of $\beta\mathbb{Z}\left[i\right]$. Also, $J\left(\mathbb{Z}\left[i\right]\right)$
being closed, it is a compact subsemigroup of $\beta\mathbb{Z}\left[i\right]$.
Therefore, it suffices to show that for each $m$, $\bigcap_{m=1}^{\infty}\overline{FS\left(\left\langle x_{n}\right\rangle _{n=m}^{\infty}\right)}\cap J\left(\mathbb{Z}\left[i\right]\right)\neq\emptyset$,
because then it must contain an idempotent. For this, it in turn suffices
to show that, for any $m\in\mathbb{N}$ we have $\overline{FS\left(\left\langle x_{n}\right\rangle _{n=m}^{\infty}\right)}\cap J\left(\mathbb{Z}\left[i\right]\right)\neq\emptyset$.
So let $m\in\mathbb{N}$ be given. Then 
\begin{align*}
FS\left(\left\langle x_{n}\right\rangle _{n=1}^{\infty}\right) & =FS\left(\left\langle x_{n}\right\rangle _{n=1}^{m-1}\right)\cup FS\left(\left\langle x_{n}\right\rangle _{n=m}^{\infty}\right)\cup\\
 & \bigcup\left\{ t+FS\left(\left\langle x_{n}\right\rangle _{n=m}^{\infty}\right):t\in FS\left(\left\langle x_{n}\right\rangle _{n=1}^{m-1}\right)\right\} .
\end{align*}

Now, $FS\left(\left\langle x_{n}\right\rangle _{n=1}^{m-1}\right)\in p$
could not happen for $p\in\beta\mathbb{Z}\left[i\right]\setminus\mathbb{Z}\left[i\right]$
and the proof is obvious if $FS\left(\left\langle x_{n}\right\rangle _{n=m}^{\infty}\right)\in p$.
If the last term in the equation is in $p$, then for some $t\in FS\left(\left\langle x_{n}\right\rangle _{n=1}^{m-1}\right)$
and $q\in\overline{FS\left(\left\langle x_{n}\right\rangle _{n=m}^{\infty}\right)}$
so that $t+q=p$. For every $F\in q$, we have $t\in\left\{ z\in\mathbb{Z}\left[i\right]:-z+\left(t+F\right)\in q\right\} $
so that $t+F\in p$. Since $J$-sets in $\left(\mathbb{Z}\left[i\right],+\right)$
are translation invariant, $F$ becomes a $J$-set. Thus $q\in\bigcap_{m=1}^{\infty}\overline{FS\left(\left\langle x_{n}\right\rangle _{n=m}^{\infty}\right)}\cap J\left(\mathbb{Z}\left[i\right]\right)$.
\end{proof}
\begin{lem}
\label{Lemma 2.14} Let $\left\langle x_{n}\right\rangle _{n=1}^{\infty}$
be a sequence in $\mathbb{Z}\left[i\right]$ and let $0\neq z\in\mathbb{Z}\left[i\right]$.
Prove that for each $m\in\mathbb{N}$ there exists $H\in\mathcal{P}_{f}\left(\left\{ n\in\mathbb{N}:n>m\right\} \right)$
such that $z\mid\sum_{n\in H}x_{n}$. In other words, For any $0\neq z\in\mathbb{Z}\left[i\right]$,
the set $z\mathbb{Z}\left[i\right]$ is an $\text{IP}^{*}$ set in
$\mathbb{Z}\left[i\right]$.
\end{lem}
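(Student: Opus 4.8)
The plan is to obtain the divisibility statement by a pigeonhole argument on consecutive partial sums, using the finiteness of the residue ring $\mathbb{Z}\left[i\right]/z\mathbb{Z}\left[i\right]$, and then to deduce the $\text{IP}^{*}$ assertion from the standard characterization of such sets. First I would pin down where the hypothesis $z\neq0$ is used: because $N(z)=z\bar{z}$ is a positive rational integer lying in $z\mathbb{Z}\left[i\right]$ (as $N(z)=z\cdot\bar{z}$ with $\bar{z}\in\mathbb{Z}\left[i\right]$), we have $N(z)\mathbb{Z}\left[i\right]\subseteq z\mathbb{Z}\left[i\right]$, so $z\mathbb{Z}\left[i\right]$ contains a finite-index subgroup of $\left(\mathbb{Z}\left[i\right],+\right)$ and is therefore itself of finite index; I will write $q$ for that index (one may note $q\le N(z)^{2}$, though only finiteness is needed).

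Then, given $m\in\mathbb{N}$, I would look at the $q+1$ partial sums $s_{0}=0$ and $s_{k}=\sum_{n=m+1}^{m+k}x_{n}$ for $1\le k\le q$. Since $\mathbb{Z}\left[i\right]/z\mathbb{Z}\left[i\right]$ has only $q$ elements, two of these partial sums are congruent modulo $z\mathbb{Z}\left[i\right]$, say $s_{i}\equiv s_{j}$ with $0\le i<j\le q$; then $H=\left\{ m+i+1,\ldots,m+j\right\}$ is a nonempty element of $\mathcal{P}_{f}\left(\left\{ n\in\mathbb{N}:n>m\right\}\right)$ with $\sum_{n\in H}x_{n}=s_{j}-s_{i}\in z\mathbb{Z}\left[i\right]$, that is, $z\mid\sum_{n\in H}x_{n}$. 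That settles the first assertion.

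For the ``in other words'' clause, the statement just proved says exactly that $z\mathbb{Z}\left[i\right]$ meets $FS\left(\left\langle x_{n}\right\rangle _{n=1}^{\infty}\right)$ for every sequence $\left\langle x_{n}\right\rangle _{n=1}^{\infty}$ in $\mathbb{Z}\left[i\right]$ (taking $m=1$; a general $m$ gives the formally stronger statement that it meets every tail $FS$-set). To conclude it is $\text{IP}^{*}$, I would argue by contradiction: if some idempotent $p\in\beta\mathbb{Z}\left[i\right]$ omitted $z\mathbb{Z}\left[i\right]$, then $\mathbb{Z}\left[i\right]\setminus z\mathbb{Z}\left[i\right]\in p$; but any member of an idempotent contains the $FS$-set of some sequence (Galvin--Glazer; see \cite{key-7}), and that $FS$-set would then be disjoint from $z\mathbb{Z}\left[i\right]$, contradicting the previous paragraph. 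Hence $z\mathbb{Z}\left[i\right]$ lies in every idempotent of $\beta\mathbb{Z}\left[i\right]$ and is $\text{IP}^{*}$.

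I do not foresee a genuine obstacle here: the core is elementary finite combinatorics and the remainder is the routine dictionary between the combinatorial and ultrafilter descriptions of $\text{IP}^{*}$. The one point to be careful about is the finiteness of $\mathbb{Z}\left[i\right]/z\mathbb{Z}\left[i\right]$, which is precisely what fails when $z=0$; everything else is bookkeeping. As an alternative to the contradiction argument above, one can note directly that any finite-index subgroup $H$ of a group $G$ is $\text{IP}^{*}$: the quotient homomorphism $G\to G/H$ extends to $\beta G\to\beta(G/H)=G/H$, every idempotent of $\beta G$ is carried to the identity of the finite group $G/H$, and hence $H$ belongs to every idempotent of $\beta G$.
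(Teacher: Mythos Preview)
Your argument is correct. It differs from the paper's proof in the way the pigeonhole principle is applied. You pigeonhole on the \emph{partial sums} $s_{0},s_{1},\ldots,s_{q}$ modulo $z\mathbb{Z}[i]$ and extract a consecutive block $H=\{m+i+1,\ldots,m+j\}$ from a coincidence $s_{i}\equiv s_{j}$. The paper instead pigeonholes on the remainders of the \emph{individual terms} $x_{n}$: since there are only finitely many residues modulo $z$, some residue $r$ occurs for infinitely many $x_{n}$; summing $|z|^{2}=N(z)=z\bar{z}$ such terms yields a sum congruent to $N(z)\cdot r\equiv 0\pmod{z}$. Your version is the more classical trick and produces an interval $H$, while the paper's version relies implicitly on the identity $N(z)=z\bar{z}$ to kill the repeated remainder. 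You also spell out the passage from the divisibility statement to the $\text{IP}^{*}$ conclusion (via Galvin--Glazer or the finite-index subgroup observation), which the paper leaves as a tacit reformulation; that addition is welcome and correct.
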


\begin{proof}
Fix $m\in\mathbb{N}$. For any $n>m$ and by division algorithm, $\exists\,w,r\in\mathbb{Z}\left[i\right]$
such that $x_{n}=z\cdot w+r,\,\text{where }0\leq\left|r\right|<\left|z\right|$
and there will be at most $\left|z\right|^{2}$ number of distinct
remainders $r$ for different $x_{n}$. If for some $x_{n}$, $r=0$
then $H=\left\{ n\right\} $. Otherwise, as the remainders are finite
at least one of the remainder $i\in\mathbb{Z}\left[i\right]$ repeated
infinitely. Take $\left|z\right|^{2}$ members of $\left\langle x_{n}\right\rangle _{n=m+1}^{\infty}$
those remainders are $i$. Take $H$ as the set of suffix of those
$\left|z\right|^{2}$ members.
\end{proof}
The author likes to thank Prof. Neil Hindman for providing the proof
of Lemma \ref{Lemma 2.18}(b), which is due to Prof. Dona Strauss.
\begin{lem}
\label{Lemma 2.15} Let $f:\mathbb{N}\rightarrow\mathbb{Z}\left[i\right]$
and let $\left\langle H_{n}\right\rangle _{n=1}^{\infty}$ be a sequence
in $\mathcal{P}_{f}\left(\mathbb{N}\right)$ and $z\in\mathbb{Z}\left[i\right]\setminus\left\{ 0\right\} $.
There is a union subsystem $\left\langle G_{n}\right\rangle _{n=1}^{\infty}$
of $\left\langle H_{n}\right\rangle _{n=1}^{\infty}$ such that for
each $n\in\mathbb{N},\:\sum_{t\in G_{n}}f\left(t\right)\in z\mathbb{Z}\left[i\right]$.
\end{lem}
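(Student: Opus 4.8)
The plan is to reduce the statement to an iterated application of Lemma~\ref{Lemma 2.14}. First I would pass from $f$ and $\langle H_n\rangle_{n=1}^\infty$ to the sequence of block sums: for each $k\in\mathbb{N}$ put $y_k=\sum_{t\in H_k}f(t)$, so that $\langle y_k\rangle_{k=1}^\infty$ is a sequence in $\mathbb{Z}[i]$. As is implicit in the notion of a sum (union) subsystem, I take the $H_k$ to satisfy $\max H_k<\min H_{k+1}$, so that they are pairwise disjoint and a sum of $f$ over any finite union of the $H_k$ splits additively into the corresponding $y_k$'s.

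Next I would construct recursively a sequence $\langle\mathbb{K}_n\rangle_{n=1}^\infty$ in $\mathcal{P}_f(\mathbb{N})$ with $\max\mathbb{K}_n<\min\mathbb{K}_{n+1}$ and $z\mid\sum_{k\in\mathbb{K}_n}y_k$ for every $n$. For $n=1$, apply Lemma~\ref{Lemma 2.14} to the sequence $\langle y_k\rangle_{k=1}^\infty$, the element $z$, and $m=1$, obtaining a finite $\mathbb{K}_1\subseteq\{k:k>1\}$ with $z\mid\sum_{k\in\mathbb{K}_1}y_k$. Having chosen $\mathbb{K}_1,\dots,\mathbb{K}_{n-1}$, set $m=\max\mathbb{K}_{n-1}$ and apply Lemma~\ref{Lemma 2.14} once more to produce a finite $\mathbb{K}_n\subseteq\{k:k>m\}$ with $z\mid\sum_{k\in\mathbb{K}_n}y_k$. (One may equally well phrase this step via the last sentence of Lemma~\ref{Lemma 2.14}: since $z\mathbb{Z}[i]$ is an $\mathrm{IP}^*$ set, it meets $FS\big(\langle y_k\rangle_{k=m}^\infty\big)$ for every $m$.)

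Finally I would set $G_n=\bigcup_{k\in\mathbb{K}_n}H_k$. Because $\max\mathbb{K}_n<\min\mathbb{K}_{n+1}$, the sequence $\langle G_n\rangle_{n=1}^\infty$ is a union subsystem of $\langle H_n\rangle_{n=1}^\infty$, and since the $H_k$ are pairwise disjoint,
\[
\sum_{t\in G_n}f(t)=\sum_{k\in\mathbb{K}_n}\,\sum_{t\in H_k}f(t)=\sum_{k\in\mathbb{K}_n}y_k\in z\mathbb{Z}[i],
\]
which is exactly the required conclusion.

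The whole argument is essentially bookkeeping: the only points needing attention are keeping the index blocks $\mathbb{K}_n$ strictly increasing (so that $\langle G_n\rangle$ is genuinely a union subsystem) and invoking the disjointness of the $H_k$ so that the block sums over $G_n$ telescope into $\sum_{k\in\mathbb{K}_n}y_k$. All the substantive content --- that one can always find, past any given index, a finite sub-block whose sum is divisible by $z$ --- is already packaged in Lemma~\ref{Lemma 2.14}, so there is nothing further to compute inside $\mathbb{Z}[i]$.
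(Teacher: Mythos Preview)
Your argument is correct and follows essentially the same route as the paper: form the block sums $y_k=\sum_{t\in H_k}f(t)$ (the paper calls these $x_n$), iterate Lemma~\ref{Lemma 2.14} to produce index blocks $\mathbb{K}_n$ with $\max\mathbb{K}_n<\min\mathbb{K}_{n+1}$ and $z\mid\sum_{k\in\mathbb{K}_n}y_k$, and then set $G_n=\bigcup_{k\in\mathbb{K}_n}H_k$. Your explicit remark that one needs $\max H_k<\min H_{k+1}$ for the sum over $G_n$ to split as $\sum_{k\in\mathbb{K}_n}y_k$ is a point the paper leaves implicit.
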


\begin{proof}
Fix $z\in\mathbb{Z}\left[i\right]$. Let $\left\langle x_{n}\right\rangle _{n=1}^{\infty}$
be a sequence as $x_{n}=\sum_{t\in H_{n}}f\left(t\right)$. Let $m=1$,
then by lemma \ref{Lemma 2.14} we get $K_{1}\in\mathcal{P}_{f}\left(\left\{ n\in\mathbb{N}:n>m\right\} \right)$
such that $z\mid\sum_{t\in K_{1}}x_{t}$. Take $n_{1}=\max K_{1}$
and $m=n_{1}$, then again by lemma \ref{Lemma 2.14} we get $K_{2}\in\mathcal{P}_{f}\left(\left\{ n\in\mathbb{N}:n>m\right\} \right)$
such that $z\mid\sum_{t\in K_{2}}x_{t}$. Continuing this process
we get the sequences $\left\langle K_{n}\right\rangle _{n=1}^{\infty}$
such that for each $n\in\mathbb{N}$, $\max K_{n}<\min K_{n+1}$ \&
$\left\langle G_{n}\right\rangle _{n=1}^{\infty}$ as $G_{n}=\bigcup_{t\in K_{n}}H_{t}$.

Thus for each $n\in\mathbb{N}$, $z\mid\sum_{t\in K_{n}}x_{t}\,\Rightarrow\,z\mid\sum_{t\in K_{n}}\sum_{i\in H_{t}}f\left(i\right)\,\Rightarrow z\mid\sum_{t\in G_{n}}f\left(t\right)$.
This proves the lemma.
\end{proof}
\begin{lem}
\label{Lemma 2.16} Let $F\in\mathcal{P}_{f}\left(\mathbb{^{N}Z}\left[i\right]\right)$
and let $z\in\mathbb{Z}\left[i\right]\setminus\left\{ 0\right\} $.
There exists a sequence $\left\langle K_{n}\right\rangle _{n=1}^{\infty}$
in $\mathcal{P}_{f}\left(\mathbb{N}\right)$ such that for each $n\in\mathbb{N},\,\max K_{n}<\min K_{n+1}$
and for each $f\in F$ and each $n\in\mathbb{N}$, $\sum_{t\in K_{n}}f\left(t\right)\in z\mathbb{Z}\left[i\right]$.
\end{lem}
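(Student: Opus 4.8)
The plan is to iterate Lemma \ref{Lemma 2.15} once for each member of the finite family $F$, peeling off one sequence at a time. Write $F=\{f_1,f_2,\ldots,f_k\}$. The key observation is that Lemma \ref{Lemma 2.15} takes an arbitrary sequence $\langle H_n\rangle_{n=1}^\infty$ in $\mathcal{P}_f(\mathbb{N})$ and produces a \emph{union subsystem} $\langle G_n\rangle_{n=1}^\infty$ (so $G_n=\bigcup_{t\in L_n}H_t$ for some blocks $L_n$ with $\max L_n<\min L_{n+1}$) along which the sums of a \emph{fixed} function lie in $z\mathbb{Z}[i]$; and crucially, any further union subsystem of $\langle G_n\rangle$ still has that divisibility property, since a sum of sums from $z\mathbb{Z}[i]$ stays in $z\mathbb{Z}[i]$.

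First I would start from the trivial sequence $H_n^{(0)}=\{n\}$ for each $n\in\mathbb{N}$. Applying Lemma \ref{Lemma 2.15} to the function $f_1$ and the sequence $\langle H_n^{(0)}\rangle$ yields a union subsystem $\langle H_n^{(1)}\rangle$ with $\sum_{t\in H_n^{(1)}}f_1(t)\in z\mathbb{Z}[i]$ for every $n$. Then I would apply Lemma \ref{Lemma 2.15} again, this time to $f_2$ and the sequence $\langle H_n^{(1)}\rangle$, obtaining a union subsystem $\langle H_n^{(2)}\rangle$ of $\langle H_n^{(1)}\rangle$ with $\sum_{t\in H_n^{(2)}}f_2(t)\in z\mathbb{Z}[i]$ for every $n$. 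Since $\langle H_n^{(2)}\rangle$ is a union subsystem of $\langle H_n^{(1)}\rangle$, each $H_n^{(2)}$ is a union of consecutive blocks $H_j^{(1)}$, and hence $\sum_{t\in H_n^{(2)}}f_1(t)$ is a sum of terms each lying in $z\mathbb{Z}[i]$, so it remains in $z\mathbb{Z}[i]$. Iterating this $k$ times produces $\langle H_n^{(k)}\rangle$ such that for every $n\in\mathbb{N}$ and every $i\in\{1,\ldots,k\}$ we have $\sum_{t\in H_n^{(k)}}f_i(t)\in z\mathbb{Z}[i]$.

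Finally I would set $K_n = H_n^{(k)}$. The condition $\max K_n<\min K_{n+1}$ is inherited from the union-subsystem structure at each stage (a union subsystem of a sequence with this gap property again has it, since the blocks $L_n$ are increasing and disjoint). This gives exactly the conclusion of the lemma. The only point requiring care — and the one I would write out explicitly — is the stability claim: that passing to a union subsystem preserves divisibility for the functions handled in earlier rounds; this is immediate from $\sum_{t\in\bigcup_j H_{j}}f(t)=\sum_j\sum_{t\in H_j}f(t)$ together with closure of the ideal $z\mathbb{Z}[i]$ under addition. There is no real obstacle here; the statement is essentially a finite induction wrapping Lemma \ref{Lemma 2.15}, and the whole content is bookkeeping about union subsystems. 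One could also phrase it as a single application of a "simultaneous" version of Lemma \ref{Lemma 2.14} to the product sequence $\langle (f_1(n),\ldots,f_k(n))\rangle$ in $(\mathbb{Z}[i])^k$, using that $(z\mathbb{Z}[i])^k$ has finite index there, but the iterative argument is cleaner given what has already been proved.
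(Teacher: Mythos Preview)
Your proposal is correct and follows essentially the same route as the paper: enumerate $F=\{f_1,\ldots,f_k\}$, iterate Lemma~\ref{Lemma 2.15} once per $f_i$ to pass to successive union subsystems $\langle H_n^{(i)}\rangle$, and set $K_n=H_n^{(k)}$. The only cosmetic differences are that the paper starts from an arbitrary sequence $\langle H_n\rangle$ rather than the trivial one $H_n^{(0)}=\{n\}$, and that you spell out more carefully the (obvious) stability claim that divisibility for earlier $f_j$ survives further refinement.
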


\begin{proof}
Let $F=\left\{ f_{1},f_{2},\ldots,f_{k}\right\} $ and pick a sequence
$\left\langle H_{n}\right\rangle _{n=1}^{\infty}$ in $\mathcal{P}_{f}\left(\mathbb{N}\right)$.
Then for $f_{1}$, by lemma \ref{Lemma 2.15} we get a union subsystem
$\left\langle H_{n}^{1}\right\rangle _{n=1}^{\infty}$ of $\left\langle H_{n}\right\rangle _{n=1}^{\infty}$
such that for each $n\in\mathbb{N},\:\sum_{t\in H_{n}^{1}}f_{1}\left(t\right)\in z\mathbb{Z}\left[i\right]$.
Now, for $f_{2}$, again using lemma \ref{Lemma 2.15} we get a union
subsystem $\left\langle H_{n}^{2}\right\rangle _{n=1}^{\infty}$ of
$\left\langle H_{n}^{1}\right\rangle _{n=1}^{\infty}$ such that for
each $n\in\mathbb{N},\:\sum_{t\in H_{n}^{2}}f_{2}\left(t\right)\in z\mathbb{Z}\left[i\right]$
and also for each $n\in\mathbb{N},\:\sum_{t\in H_{n}^{2}}f_{1}\left(t\right)\in z\mathbb{Z}\left[i\right]$.
Continuing this process $k$-times, we have a union subsystem $\left\langle H_{n}^{k}\right\rangle _{n=1}^{\infty}$
of $\left\langle H_{n}^{k-1}\right\rangle _{n=1}^{\infty}$, such
that for each $n\in\mathbb{N},\:\sum_{t\in H_{n}^{k}}f_{i}\left(t\right)\in z\mathbb{Z}\left[i\right],\,i\in\left\{ 1,2,\ldots,k\right\} $.
Take $K_{n}=H_{n}^{k}\ \forall n\in\mathbb{N}$, we have the lemma.
\end{proof}
\begin{lem}
\label{Lemma 2.17}Let $p$ be an idempotent in $\left(\beta\mathbb{Z}\left[i\right],+\right)$
and let $\alpha\in\mathbb{Q}\left[i\right]$. Then $\alpha\cdot p$
is also an idempotent in $\left(\beta\mathbb{Z}\left[i\right],+\right)$.
\end{lem}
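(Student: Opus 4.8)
The plan is to define $\alpha\cdot p$ as the push-forward of $p$ under multiplication by $\alpha$, after first restricting that map to a subring of $\mathbb{Z}\left[i\right]$ on which it is integer valued, and then to read off idempotency from the fact that continuous extensions of semigroup homomorphisms are homomorphisms. If $\alpha=0$ there is nothing to do, since $0\cdot p$ is the principal ultrafilter at $0$, which is idempotent, so I assume $\alpha\neq 0$ and write $\alpha=c/n$ with $c\in\mathbb{Z}\left[i\right]\setminus\left\{0\right\}$ and $n\in\mathbb{N}$, taking for $n$ a common denominator of the real and imaginary parts of $\alpha$. Then the map $f\colon n\mathbb{Z}\left[i\right]\to\mathbb{Z}\left[i\right]$ given by $f(x)=\alpha x=c\,(x/n)$ is well defined, is injective because $\mathbb{Z}\left[i\right]$ is an integral domain and $\alpha\neq 0$, and is a homomorphism of $(n\mathbb{Z}\left[i\right],+)$ into $(\mathbb{Z}\left[i\right],+)$.

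The key input from the paper's own results is that $n\mathbb{Z}\left[i\right]$ is an $\text{IP}^{*}$ set by Lemma~\ref{Lemma 2.14}; since $p$ is an idempotent, this forces $n\mathbb{Z}\left[i\right]\in p$, i.e.\ $p\in\overline{n\mathbb{Z}\left[i\right]}$. Now I would invoke two standard facts about $\beta S$ (from \cite{key-7}): for a subsemigroup $T$ of a discrete semigroup $S$, the closure $\overline{T}$ taken in $\beta S$ is a compact subsemigroup of $\beta S$, and the continuous map $\beta T\to\overline{T}$ extending the inclusion is both a homeomorphism and an isomorphism of semigroups; and the continuous extension $\widetilde{\varphi}\colon\beta A\to\beta B$ of any homomorphism $\varphi\colon A\to B$ of discrete semigroups is again a homomorphism. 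Applying the first fact with $T=n\mathbb{Z}\left[i\right]$, our idempotent $p$ becomes an idempotent of $\beta(n\mathbb{Z}\left[i\right])$; applying the second to $f$, the extension $\widetilde{f}\colon\beta(n\mathbb{Z}\left[i\right])\to\beta\mathbb{Z}\left[i\right]$ is a homomorphism, whence $\widetilde{f}(p)+\widetilde{f}(p)=\widetilde{f}(p+p)=\widetilde{f}(p)$. Thus $\widetilde{f}(p)$ is an idempotent of $(\beta\mathbb{Z}\left[i\right],+)$, and I would define $\alpha\cdot p:=\widetilde{f}(p)=\left\{A\subseteq\mathbb{Z}\left[i\right]:\left\{x\in n\mathbb{Z}\left[i\right]:\alpha x\in A\right\}\in p\right\}$.

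It remains only to check two routine points. First, $\alpha\cdot p$ should not depend on the representation $\alpha=c/n$: if also $\alpha=c'/n'$ then $n\mathbb{Z}\left[i\right]$, $n'\mathbb{Z}\left[i\right]$ and hence $nn'\mathbb{Z}\left[i\right]$ all lie in $p$, the two candidate maps agree on $nn'\mathbb{Z}\left[i\right]$, and the push-forward of an ultrafilter depends only on the restriction of the map to a member of that ultrafilter, so the two definitions coincide. Second, one must make sure the cited $\beta S$-facts are quoted in their right topological form and that the operation on $\overline{n\mathbb{Z}\left[i\right]}$ really is the restriction of the operation on $\beta\mathbb{Z}\left[i\right]$ --- both are immediate. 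I do not anticipate any real obstacle: the only substantive ingredient is Lemma~\ref{Lemma 2.14}, which places $p$ on $n\mathbb{Z}\left[i\right]$, and after that the argument is just an assembly of standard Stone--\v{C}ech machinery; the one thing that needs a little care is simply pinning down the definition of $\alpha\cdot p$ cleanly.
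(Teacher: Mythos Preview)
Your proof is correct and uses essentially the same ingredients as the paper's: Lemma~\ref{Lemma 2.14} to place $p$ over the relevant subring, and the fact (\cite[Corollary~4.22]{key-7}) that the continuous extension of an additive homomorphism is again a homomorphism, hence preserves idempotents. The only cosmetic difference is in packaging: the paper extends multiplication by $\alpha$ as a map $I_\alpha:\mathbb{Z}[i]\to\mathbb{Q}[i]$, obtains an idempotent $\alpha\cdot p\in\beta\mathbb{Q}[i]_d$, and then uses Lemma~\ref{Lemma 2.14} after the fact to check that $\mathbb{Z}[i]\in\alpha\cdot p$, i.e.\ that $\alpha\cdot p\in\overline{\mathbb{Z}[i]}\cong\beta\mathbb{Z}[i]$; you instead restrict the domain to $n\mathbb{Z}[i]$ so that the map already lands in $\mathbb{Z}[i]$, and use Lemma~\ref{Lemma 2.14} up front to ensure $p$ lives over that domain. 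The two constructions produce the same ultrafilter, so your additional remarks on well-definedness are sound but not strictly needed once one identifies your $\widetilde{f}(p)$ with the paper's $\widetilde{I_\alpha}(p)$.
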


\begin{proof}
The function $I_{\alpha}:\mathbb{Z}\left[i\right]\rightarrow\mathbb{Q}\left[i\right]$
defined by $I_{\alpha}\left(z\right)=\alpha\cdot z$ is an injective
homomorphism and so is its continuous extension $\widetilde{I_{\alpha}}:\beta\mathbb{Z}\left[i\right]\rightarrow\beta\mathbb{Q}\left[i\right]_{d}$
by \cite[Corollary 4.22]{key-7}. So $\alpha\cdot p$ is an idempotent
in $\left(\beta\mathbb{Q}\left[i\right]_{d},+\right)$. Furthermore,
if $\alpha=\frac{z_{1}}{z_{2}}$, with $z_{1}z_{2}\in\mathbb{Z}\left[i\right]$,
then as by lemma \ref{Lemma 2.14} $z_{2}\mathbb{Z}\left[i\right]\in p$
implies that $\mathbb{Z}\left[i\right]\in\alpha\cdot p$ and hence
that $\alpha\cdot p\in\beta\mathbb{Z}\left[i\right]$.
\end{proof}
\begin{lem}
\label{Lemma 2.18}(a) If $A$ is central set in $\left(\mathbb{Z}\left[i\right],+\right)$
then $zA$ and $z^{-1}A$ is also a central set for any $z\in\mathbb{Z}\left[i\right]\setminus\left\{ 0\right\} $.

(b) If $A$ is $C$-set in $\left(\mathbb{Z}\left[i\right],+\right)$
then $zA$ and $z^{-1}A$ is also a $C$-set for any $z\in\mathbb{Z}\left[i\right]\setminus\left\{ 0\right\} $.
\end{lem}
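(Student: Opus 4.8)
The plan is to push both parts through the dilation map $\mu_z\colon\beta\mathbb{Z}[i]\to\beta\mathbb{Z}[i]$, the continuous extension of $x\mapsto z\cdot x$. Exactly as in Lemma~\ref{Lemma 2.17}, \cite[Corollary 4.22]{key-7} makes $\mu_z$ an injective additive homomorphism; as a continuous injection of the compact Hausdorff space $\beta\mathbb{Z}[i]$ into itself it is a homeomorphism onto its image, hence a topological semigroup isomorphism of $(\beta\mathbb{Z}[i],+)$ onto the closed subsemigroup $\overline{z\mathbb{Z}[i]}=\mu_z(\beta\mathbb{Z}[i])$. For an ultrafilter $p$ we have $B\in\mu_z(p)$ precisely when $z^{-1}B\in p$, where $z^{-1}B=\{y\in\mathbb{Z}[i]:z\cdot y\in B\}$; and since $\mathbb{Z}[i]$ is an integral domain, $z^{-1}(zA)=A$ and $z^{-1}A=z^{-1}(A\cap z\mathbb{Z}[i])$ for every $A$. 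The argument then reduces to the assertion that $\mu_z(\mathcal I)=\mathcal I\cap\overline{z\mathbb{Z}[i]}$ both for $\mathcal I=K(\beta\mathbb{Z}[i])$ and for $\mathcal I=J(\mathbb{Z}[i])$, together with the fact (Lemma~\ref{Lemma 2.14}) that $z\mathbb{Z}[i]$ is an $\mathrm{IP}^{*}$ set and hence a member of every idempotent, which makes $\mathcal I\cap\overline{z\mathbb{Z}[i]}$ nonempty and puts every relevant idempotent of $\mathcal I$ into $\overline{z\mathbb{Z}[i]}$.

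For (a), take $\mathcal I=K(\beta\mathbb{Z}[i])$. Since $\overline{z\mathbb{Z}[i]}$ is a closed subsemigroup that meets $K(\beta\mathbb{Z}[i])$, \cite[Theorem 1.65]{key-7} yields $K(\overline{z\mathbb{Z}[i]})=\overline{z\mathbb{Z}[i]}\cap K(\beta\mathbb{Z}[i])$, and transporting this along the isomorphism $\mu_z$ gives $\mu_z(K(\beta\mathbb{Z}[i]))=\overline{z\mathbb{Z}[i]}\cap K(\beta\mathbb{Z}[i])$; thus the minimal idempotents of $\beta\mathbb{Z}[i]$, all of which lie in $\overline{z\mathbb{Z}[i]}$, coincide with the minimal idempotents of $\overline{z\mathbb{Z}[i]}$. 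Now if $A$ is central, pick a minimal idempotent $p$ with $A\in p$; then $\mu_z(p)$ is an idempotent (Lemma~\ref{Lemma 2.17}) lying in $\mu_z(K(\beta\mathbb{Z}[i]))\subseteq K(\beta\mathbb{Z}[i])$, hence minimal, and $zA\in\mu_z(p)$ since $z^{-1}(zA)=A\in p$; so $zA$ is central. For $z^{-1}A$, pick a minimal idempotent $q$ with $A\in q$; as $z\mathbb{Z}[i]\in q$ we get $q\in\overline{z\mathbb{Z}[i]}$, so $q=\mu_z(p)$ with $p=\mu_z^{-1}(q)$ an idempotent lying in $K(\beta\mathbb{Z}[i])$, hence minimal, and $A\in\mu_z(p)=q$ gives $z^{-1}A\in p$; so $z^{-1}A$ is central.

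For (b), take $\mathcal I=J(\mathbb{Z}[i])$ and recall from Theorem~\ref{Theorem 2.8}(ii) that $A$ is a $C$-set iff some idempotent of $J(\mathbb{Z}[i])$ has $A$ as a member. Here \cite[Theorem 1.65]{key-7} does not apply, so I would prove $\mu_z(J(\mathbb{Z}[i]))=J(\mathbb{Z}[i])\cap\overline{z\mathbb{Z}[i]}$ directly from two observations on $J$-sets. (i)~If $D$ is a $J$-set then $zD$ is a $J$-set: given $G\in\mathcal{P}_{f}({}^{\mathbb{N}}\mathbb{Z}[i])$, apply Lemma~\ref{Lemma 2.16} to get a block sequence $\langle K_n\rangle$ with $\max K_n<\min K_{n+1}$ on which every $\sum_{t\in K_n}g(t)$, $g\in G$, lies in $z\mathbb{Z}[i]$, apply the $J$-set property of $D$ to the sequences $n\mapsto\frac1z\sum_{t\in K_n}g(t)$ to produce a witness $(a,H')$, and observe that $(za,\bigcup_{n\in H'}K_n)$ witnesses $zD$. (ii)~If $E\subseteq z\mathbb{Z}[i]$ is a $J$-set then $z^{-1}E$ is a $J$-set: given $G$, apply the $J$-set property of $E$ to the sequences $z\cdot g$, $g\in G$; those sequences and all their partial sums lie in $z\mathbb{Z}[i]$ and $E\subseteq z\mathbb{Z}[i]$, so the witnessing translate automatically lies in $z\mathbb{Z}[i]$, and dividing the whole witness by $z$ gives one for $z^{-1}E$. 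From (i), if $p\in J(\mathbb{Z}[i])$ and $B\in\mu_z(p)$, then $z^{-1}B\in p$ is a $J$-set, so $B\supseteq z(z^{-1}B)$ is a $J$-set, whence $\mu_z(p)\in J(\mathbb{Z}[i])$. Conversely, if $q\in J(\mathbb{Z}[i])$ with $z\mathbb{Z}[i]\in q$, then $q\in\overline{z\mathbb{Z}[i]}$; for $A\in p:=\mu_z^{-1}(q)$ the set $zA\cup(\mathbb{Z}[i]\setminus z\mathbb{Z}[i])$ lies in $\mu_z(p)=q$, so intersecting with $z\mathbb{Z}[i]\in q$ shows $zA\in q$, hence $zA$ is a $J$-set contained in $z\mathbb{Z}[i]$ and $A=z^{-1}(zA)$ is a $J$-set by (ii), so $p\in J(\mathbb{Z}[i])$. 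With this, (b) follows as in (a): if $A$ is a $C$-set, choose an idempotent $p\in J(\mathbb{Z}[i])$ with $A\in p$; then $\mu_z(p)$ is an idempotent of $J(\mathbb{Z}[i])$ with $zA\in\mu_z(p)$, so $zA$ is a $C$-set; and since $z\mathbb{Z}[i]\in p$ (being $\mathrm{IP}^{*}$), $p=\mu_z(p_0)$ for an idempotent $p_0\in J(\mathbb{Z}[i])$, and $A\in\mu_z(p_0)$ gives $z^{-1}A\in p_0$, so $z^{-1}A$ is a $C$-set.

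The main obstacle is the $J$-set bookkeeping in (b), since the naive claim ``$J$-sets are preserved by multiplication and by division by $z$'' is false: $1+z\mathbb{Z}[i]$ is a $J$-set --- it is a translate of $z\mathbb{Z}[i]$, which is central and hence a $J$-set, and $J$-sets are translation invariant --- while $z^{-1}(1+z\mathbb{Z}[i])=\emptyset$ is not a $J$-set. So the divisibility step (ii) must be confined to subsets of $z\mathbb{Z}[i]$, and the $z^{-1}A$ argument must be routed through the fact that $z\mathbb{Z}[i]$ is $\mathrm{IP}^{*}$ (Lemma~\ref{Lemma 2.14}), which is exactly what forces $z\mathbb{Z}[i]$ into the relevant idempotent; similarly, in (a) one must verify $\overline{z\mathbb{Z}[i]}\cap K(\beta\mathbb{Z}[i])\neq\emptyset$ before invoking \cite[Theorem 1.65]{key-7}. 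The appeals to Lemmas~\ref{Lemma 2.16} and~\ref{Lemma 2.17} and to Theorem~\ref{Theorem 2.8} are otherwise routine.
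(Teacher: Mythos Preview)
Your proof is correct and follows essentially the same route as the paper's: both arguments push an idempotent through the dilation map, invoke \cite[Theorem 1.65]{key-7} together with Lemma~\ref{Lemma 2.14} for part (a), and for part (b) establish that dilation preserves membership in $J(\mathbb{Z}[i])$ by the very same two computations you label (i) and (ii) --- the block-sequence argument via Lemma~\ref{Lemma 2.16} in one direction, and the ``apply the $J$-set property to $zf$ and divide'' argument in the other. The only differences are organizational: the paper handles $z$ and $1/z$ uniformly in (a) by passing to $\beta\mathbb{Q}[i]_d$ with $z\in\mathbb{Q}[i]$, whereas you stay inside $\beta\mathbb{Z}[i]$ and use $\mu_z^{-1}$; and in (b) you package the two computations as the single equality $\mu_z(J(\mathbb{Z}[i]))=J(\mathbb{Z}[i])\cap\overline{z\mathbb{Z}[i]}$, while the paper records only the two facts $z\cdot p,\;\tfrac{1}{z}\cdot p\in J(\mathbb{Z}[i])$ actually needed. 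Your explicit cautionary remark that (ii) must be restricted to subsets of $z\mathbb{Z}[i]$ is a useful clarification not made in the paper.
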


\begin{proof}
(a) Take $z\in\mathbb{Q}\left[i\right]$. Then $I_{z}:\mathbb{Z}\left[i\right]\rightarrow\mathbb{Q}\left[i\right]$
is a homomorphism and so its continuous extension $\widetilde{I_{z}}:\beta\mathbb{Z}\left[i\right]\rightarrow\beta\mathbb{Q}\left[i\right]_{d}$
by \cite[Corollary 4.22]{key-7}, where $\widetilde{I_{z}}\left(p\right)=z\cdot p$.
Thus if $p$ is an idempotent, then $z\cdot p$ is also an idempotent
by lemma \ref{Lemma 2.17} and $z\cdot p\in\widetilde{I_{z}}\left[K\left(\beta\mathbb{Z}\left[i\right]\right)\right]=K\left(\overline{z\mathbb{Z}\left[i\right]}\right)$.
Now assume $z=\frac{z_{1}}{z_{2}}$ then $z_{2}\mathbb{Z}\left[i\right]\subseteq z^{-1}\left(z_{1}\mathbb{Z}\left[i\right]\right)$
thus $z_{1}\mathbb{Z}\left[i\right]\in z\cdot p$ as $z_{2}\mathbb{Z}\left[i\right]\in p$
by \ref{Lemma 2.14}. So, $z\cdot p\in\beta\mathbb{Z}\left[i\right]$.
Also $z\cdot p\in K\left(\overline{z\mathbb{Z}\left[i\right]}\right)\cap\overline{z_{1}\mathbb{Z}\left[i\right]}$
and $z_{1}\mathbb{Z}\left[i\right]\subseteq z\mathbb{Z}\left[i\right]$
and thus $K\left(\overline{z_{1}\mathbb{Z}\left[i\right]}\right)=K\left(\overline{z\mathbb{Z}\left[i\right]}\right)\cap\overline{z\mathbb{Z}\left[i\right]}$
\cite[Theorem 1.65]{key-7}. Since every idempotent of $\beta\mathbb{Z}\left[i\right]$
is in $\overline{z_{1}\mathbb{Z}\left[i\right]}$, we have that $\overline{z_{1}\mathbb{Z}\left[i\right]}\cap K\left(\beta\mathbb{Z}\left[i\right]\right)\neq\emptyset$
and consequently $K\left(\overline{z_{1}\mathbb{Z}\left[i\right]}\right)=\overline{z_{1}\mathbb{Z}\left[i\right]}\cap K\left(\beta\mathbb{Z}\left[i\right]\right)$
\cite[Theorem 1.65]{key-7}. Thus $z\cdot p\in K\left(\beta\mathbb{Z}\left[i\right]\right)$.

(b) Pick an idempotent $p\in J\left(\mathbb{Z}\left[i\right]\right)\cap\bar{C}$.
Then $z\cdot p$ and $\frac{1}{z}\cdot p$ are idempotents in $\beta\mathbb{Z}\left[i\right]$.
We claim that $z\cdot p,\frac{1}{z}\cdot p\in J\left(\mathbb{Z}\left[i\right]\right)$,
so, first let $A\in z\cdot p$. We will show that $A$ is a $J$-set.
So, let $F\in\mathcal{P}_{f}\left(\mathbb{^{N}Z}\left[i\right]\right)$.
Let $\left\langle K_{n}\right\rangle _{n=1}^{\infty}$ be as guaranteed
by \ref{Lemma 2.16}. For each $f\in F$ and $n\in\mathbb{N}$, let
$g_{f}\left(n\right)=\frac{1}{z}\sum_{t\in K_{n}}f\left(t\right)$.
Now $z^{-1}A\in p$ so pick $a\in\mathbb{Z}\left[i\right]$ and $G\in\mathcal{P}_{f}\left(\mathbb{N}\right)$
such that for each $f\in F$, $a+\sum_{n\in G}g_{f}\left(n\right)\in d^{-1}A$.
Let $H=\bigcup_{n\in G}K_{n}$. Then for $f\in F$, $da+\sum_{t\in H}f\left(t\right)\in A$.

Again, let $A\in\frac{1}{z}\cdot p$. We will show that $A$ is $J$-set.
So let $F\in\mathcal{P}_{f}\left(\mathbb{^{N}Z}\left[i\right]\right)$.
For $f\in F$, define $g_{f}\in\mathbb{^{N}Z}\left[i\right]$ by for
each $n\in\mathbb{N},$ $g_{f}\left(n\right)=zf\left(n\right)$. Since,
$zA\in p$, $zA$ is a $J$-set, so pick $a\in\mathbb{Z}\left[i\right]$
and $H\in\mathcal{P}_{f}\left(\mathbb{N}\right)$ such that for each
$f\in F$, $a+\sum_{t\in H}g_{f}\left(t\right)\in zA$. Since, $zA\subseteq z\mathbb{Z}\left[i\right]$
and each $g_{f}\left(t\right)\in z\mathbb{Z}\left[i\right]$, we must
have $a\in z\mathbb{Z}\left[i\right]$ so $\frac{1}{z}a\in\mathbb{Z}\left[i\right]$
and for each $f\in F$, $\frac{1}{z}a+\sum_{t\in H}f\left(t\right)\in A$.
\end{proof}
\begin{lem}
\label{Lemma 2.19}(a) If $A$ is a $\text{central}^{*}$ set in $\left(\mathbb{Z}\left[i\right],+\right)$
then $z^{-1}A$ is also a $\text{central}^{*}$ set for any $z\in\mathbb{Z}\left[i\right]\setminus\left\{ 0\right\} $.

(b) If $A$ is a $C^{*}$-set in $\left(\mathbb{Z}\left[i\right],+\right)$
then $z^{-1}A$ is also a $C^{*}$-set for any $z\in\mathbb{Z}\left[i\right]\setminus\left\{ 0\right\} $.
\end{lem}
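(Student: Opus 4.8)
The plan is to reduce the two ``starred'' statements to Lemma \ref{Lemma 2.18} by the standard duality: a set is $\text{central}^{*}$ (resp. $C^{*}$) exactly when it meets every central set (resp. every $C$-set). So I would first record this characterization. If $A$ fails to be $\text{central}^{*}$, pick a minimal idempotent $p$ with $A\notin p$; then $\mathbb{Z}\left[i\right]\setminus A\in p$ is central and disjoint from $A$. Conversely, if $A$ is $\text{central}^{*}$ and $B$ is central, say $B\in p$ for a minimal idempotent $p$, then $A\in p$, so $A\cap B\in p$ is nonempty. Replacing ``minimal idempotent'' by ``idempotent of $J\left(\mathbb{Z}\left[i\right]\right)$'' and invoking Theorem \ref{Theorem 2.8}(ii) in place of the definition of central yields the same dichotomy for $C^{*}$ versus $C$-sets.

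Granting this, part (a) is immediate. Fix $0\neq z\in\mathbb{Z}\left[i\right]$ and let $A$ be $\text{central}^{*}$. To see that $z^{-1}A$ is $\text{central}^{*}$ it suffices to check that $z^{-1}A$ meets every central set, so let $B$ be central. By Lemma \ref{Lemma 2.18}(a) the set $zB$ is central, hence $A\cap zB\neq\emptyset$; choosing $w$ in this intersection and writing $w=zb$ with $b\in B$ gives $zb=w\in A$, that is, $b\in z^{-1}A\cap B$. Part (b) is proved in exactly the same words, using Lemma \ref{Lemma 2.18}(b) in place of Lemma \ref{Lemma 2.18}(a) and the $C$-set form of the dual characterization.

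There is no real obstacle here; the only point that needs care is the side on which the translation is applied --- one multiplies the test set $B$ by $z$, not by $z^{-1}$, precisely so that the witness $b$ lands in $z^{-1}A$. Alternatively one can argue directly on idempotents: given a minimal idempotent $p$, the proof of Lemma \ref{Lemma 2.18}(a) already shows that $z\cdot p$ is again a minimal idempotent of $\beta\mathbb{Z}\left[i\right]$ (it is an idempotent by Lemma \ref{Lemma 2.17} and lies in $K\left(\beta\mathbb{Z}\left[i\right]\right)$), and since $z^{-1}A\in p\iff A\in\widetilde{I_{z}}\left(p\right)=z\cdot p$, the fact that $A\in z\cdot p$ for every minimal idempotent $p$ gives $z^{-1}A\in p$ for every minimal idempotent $p$; part (b) then uses that multiplication by $z$ preserves idempotents of $J\left(\mathbb{Z}\left[i\right]\right)$, which is exactly what is shown inside the proof of Lemma \ref{Lemma 2.18}(b).
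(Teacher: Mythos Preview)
Your proof is correct and follows essentially the same route as the paper: both reduce the $\text{central}^{*}$ (resp.\ $C^{*}$) claim to the duality ``meets every central (resp.\ $C$-) set,'' then apply Lemma~\ref{Lemma 2.18} to $zB$ and pull back a witness to $z^{-1}A\cap B$. The paper simply asserts the duality (``it is sufficient to show\ldots'') where you spell it out, and it omits your alternative idempotent argument, but the substance is identical.
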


\begin{proof}
(a) Let $A$ be $\text{central}^{*}$ set and $z\in\mathbb{Z}\left[i\right]\setminus\left\{ 0\right\} $.
To prove that $z^{-1}A$ is a $\text{central}^{*}$ set it is sufficient
to show that for any central set $C$, $C\cap z^{-1}A\neq\emptyset$.
Since, $C$ is central, $zC$ is also central so that $A\cap zC\neq\emptyset$.
Choose $c\in A\cap zC$ and $k\in C$ such that $c=zk$. Therefore,
$k\in z^{-1}A$ and thus $C\cap z^{-1}A\neq\emptyset$. 

(b) The proof is analogous to (a).
\end{proof}
\begin{thm}
(a) Let $\left\langle x_{n}\right\rangle _{n=1}^{\infty}$ be a minimal
sequence and $A$ be a $\text{central}^{*}$ set in $\left(\mathbb{Z}\left[i\right],+\right)$.
Then there exists a sum subsystem $\left\langle y_{n}\right\rangle _{n=1}^{\infty}$
of $\left\langle x_{n}\right\rangle _{n=1}^{\infty}$ such that 
\[
FS\left(\left\langle y_{n}\right\rangle _{n=1}^{\infty}\right)\cup FP\left(\left\langle y_{n}\right\rangle _{n=1}^{\infty}\right)\subseteq A.
\]

(b) Let $\left\langle x_{n}\right\rangle _{n=1}^{\infty}$ be an almost
minimal sequence and $A$ be a $C^{*}$-set in $\left(\mathbb{Z}\left[i\right],+\right)$.
Then there exists a sum subsystem $\left\langle y_{n}\right\rangle _{n=1}^{\infty}$
of $\left\langle x_{n}\right\rangle _{n=1}^{\infty}$ such that 
\[
FS\left(\left\langle y_{n}\right\rangle _{n=1}^{\infty}\right)\cup FP\left(\left\langle y_{n}\right\rangle _{n=1}^{\infty}\right)\subseteq A.
\]
\end{thm}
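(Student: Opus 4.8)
The plan is to run, in $(\mathbb{Z}[i],+)$, the same induction that proves Theorem \ref{Theorem 2.2} and Theorem \ref{Theorem 2.4}, with the multiplicative bookkeeping handled by the transfer Lemmas \ref{Lemma 2.17}--\ref{Lemma 2.19} (which themselves rest on the division-algorithm Lemmas \ref{Lemma 2.14}--\ref{Lemma 2.16}). Parts (a) and (b) are the same argument; only the idempotent we work with changes.

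First I would choose the idempotent. Put $T=\bigcap_{m=1}^{\infty}\overline{FS(\langle x_{n}\rangle_{n=m}^{\infty})}$, a compact subsemigroup of $\beta\mathbb{Z}[i]$ by \cite[Lemma 5.11]{key-7}. In case (a), minimality of $\langle x_{n}\rangle_{n=1}^{\infty}$ means $T\cap K(\beta\mathbb{Z}[i])\neq\emptyset$, so by \cite[Theorem 1.65]{key-7} $K(T)=T\cap K(\beta\mathbb{Z}[i])$, and this nonempty ideal of the compact right topological semigroup $T$ contains an idempotent $p$; such a $p$ is a minimal idempotent of $\beta\mathbb{Z}[i]$ lying in $T$. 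In case (b), Theorem \ref{Theorem 2.13} supplies an idempotent $p\in T\cap J(\mathbb{Z}[i])$. In either case $A\in p$: for (a) because $A$ is $\text{central}^{*}$ and $p$ is minimal, for (b) because $A$ is a $C^{*}$-set and $p\in J(\mathbb{Z}[i])$. Since $p$ is not the principal ultrafilter at $0$, also $\mathbb{Z}[i]\setminus\{0\}\in p$.

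Next I would record the two facts that drive the induction. (i) With $A^{\star}=\{a\in A:-a+A\in p\}$, the standard idempotent lemma \cite{key-7} gives $A^{\star}\in p$ and $-a+A^{\star}\in p$ for every $a\in A^{\star}$. (ii) For every $w\in\mathbb{Z}[i]\setminus\{0\}$ we have $w^{-1}A:=\{z\in\mathbb{Z}[i]:wz\in A\}\in p$: by Lemma \ref{Lemma 2.17} the left translate $w\cdot p$ is again an idempotent of $(\beta\mathbb{Z}[i],+)$, and the argument of Lemma \ref{Lemma 2.18}(a) (respectively, the argument inside the proof of Lemma \ref{Lemma 2.18}(b)) shows that $w\cdot p$ is again a minimal idempotent (respectively, again lies in $J(\mathbb{Z}[i])$); hence $A\in w\cdot p$, and since $w$ is a principal ultrafilter this says precisely that $w^{-1}A\in p$. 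This is where the Gaussian-integer structure is genuinely used; note that here $w\in\mathbb{Z}[i]$ forces $w\cdot p\in\beta\mathbb{Z}[i]$ automatically, while Lemmas \ref{Lemma 2.14}--\ref{Lemma 2.16} are exactly what one needs to stay inside $\beta\mathbb{Z}[i]$ when the multiplier is merely rational.

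Finally, the induction. Having chosen $H_{1},\dots,H_{n-1}\in\mathcal{P}_{f}(\mathbb{N})$ and nonzero $y_{j}=\sum_{t\in H_{j}}x_{t}$ with $\max H_{j}<\min H_{j+1}$, $FS(\langle y_{j}\rangle_{j=1}^{n-1})\subseteq A^{\star}$ and $FP(\langle y_{j}\rangle_{j=1}^{n-1})\subseteq A$, set $k=1+\max H_{n-1}$ (or $k=1$ when $n=1$) and
\[
B_{n}=A^{\star}\cap(\mathbb{Z}[i]\setminus\{0\})\cap FS(\langle x_{t}\rangle_{t=k}^{\infty})\cap\bigcap_{s\in FS(\langle y_{j}\rangle_{j=1}^{n-1})}(-s+A^{\star})\cap\bigcap_{\pi\in FP(\langle y_{j}\rangle_{j=1}^{n-1})}\pi^{-1}A.
\]
By (i), (ii), the inductive hypothesis and $p\in T$, each of these finitely many sets belongs to $p$, so $B_{n}\in p$ and hence $B_{n}\neq\emptyset$; choose $y_{n}\in B_{n}$ and, using $y_{n}\in FS(\langle x_{t}\rangle_{t=k}^{\infty})$, write $y_{n}=\sum_{t\in H_{n}}x_{t}$ with $H_{n}\subseteq\{k,k+1,\dots\}$, so that $\max H_{n-1}<\min H_{n}$. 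The defining conditions on $B_{n}$ then yield, for every nonempty $F\subseteq\{1,\dots,n\}$ containing $n$, that $\sum_{j\in F}y_{j}\in A^{\star}$ and $\prod_{j\in F}y_{j}\in A$ (split $y_{n}$ off from a partial sum, resp. product, formed over $F\setminus\{n\}$), which together with the inductive hypothesis closes the step. The resulting sum subsystem $\langle y_{n}\rangle_{n=1}^{\infty}$ satisfies $FS(\langle y_{n}\rangle_{n=1}^{\infty})\cup FP(\langle y_{n}\rangle_{n=1}^{\infty})\subseteq A$, proving (a) and (b) at once. I expect the only real obstacle to be (ii)---seeing that multiplication by an arbitrary nonzero Gaussian integer preserves the relevant class of idempotents, so that the preimages $\pi^{-1}A$ stay in $p$---and the bookkeeping needed to keep every $y_{n}$ nonzero; the remainder is the verbatim Galvin--Glazer induction.
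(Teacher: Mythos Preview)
Your proof is correct and follows essentially the same approach as the paper's: pick an idempotent $p$ of the appropriate type (minimal for (a), in $J(\mathbb{Z}[i])$ for (b)) inside $\bigcap_{m}\overline{FS(\langle x_n\rangle_{n=m}^{\infty})}$, use the transfer lemmas to get $w^{-1}A\in p$ for all nonzero $w$, and run the Galvin--Glazer induction with the set $A^{\star}$ handling the additive part and the sets $w^{-1}A$ handling the multiplicative part. Your version is in fact slightly tidier than the paper's in two respects---you keep the multiplicative invariant as $FP\subseteq A$ rather than $FP\subseteq A^{\star}$ (avoiding the unjustified appearance of $z^{-1}A^{\star}$), and you explicitly intersect with $\mathbb{Z}[i]\setminus\{0\}$ to guarantee the products $\pi$ are nonzero---but the underlying argument is identical.
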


\begin{proof}
(a) Since $\left\langle x_{n}\right\rangle _{n=1}^{\infty}$ is a
minimal sequence in $\mathbb{Z}\left[i\right]$ we can find some minimal
idempotent $p\in\beta\mathbb{Z}\left[i\right]$ for which $FS\left(\left\langle x_{n}\right\rangle _{n=1}^{\infty}\right)\in p$.
Again, since $A$ is a $\text{central}^{*}$ subset of $\mathbb{Z}\left[i\right]$,
by the lemma \ref{Lemma 2.19} for every $z\in\mathbb{Z}\left[i\right]$,
$z^{-1}A\in p$. Let $A^{\star}=\left\{ s\in A:-s+A\in p\right\} $.
Then by \cite[ Lemma 4.14]{key-7} $A^{\star}\in p$. Then we can
choose $y_{1}\in A^{\star}\cap FS\left(\left\langle x_{n}\right\rangle _{n=1}^{\infty}\right)$.
Inductively, let $m\in\mathbb{N}$ and $\left\langle y_{i}\right\rangle _{i=1}^{m}$,
$\left\langle H_{i}\right\rangle _{i=1}^{m}$ in $\mathcal{P}_{f}\left(\mathbb{N}\right)$
be chosen with the following properties:
\begin{enumerate}
\item for all $i\in\left\{ 1,2,\ldots,m-1\right\} $, $\max H_{i}<\min H_{i+1}$;
\item if $y_{i}=\sum_{t\in H_{i}}x_{t}$ then $\sum_{t\in H_{m+1}}x_{t}\in A^{\star}$
and $FP\left(\left\langle y_{i}\right\rangle _{i=1}^{m}\right)\subseteq A^{\star}$.
\end{enumerate}
We observe that 
\[
\left\{ \sum_{t\in H}x_{t}:H\in\mathcal{P}_{f}\left(\mathbb{Z}\left[i\right]\right),\min H>\max H_{m}\right\} \in p.
\]
 We can choose $H_{m+1}\in\mathcal{P}_{f}\left(\mathbb{N}\right)$
such that $\max H_{m}<\min H_{m+1}$, $\sum_{t\in H_{m+1}}x_{t}\in A^{\star}$
, $\sum_{t\in H_{m+1}}x_{t}\in-z+A^{\star}$. For every $z\in FS\left(\left\langle y_{i}\right\rangle _{i=1}^{m}\right)$
and $\sum_{t\in H_{m+1}}x_{t}\in z^{-1}A^{\star}$ for every $z\in FP\left(\left\langle y_{i}\right\rangle _{i=1}^{m}\right)$.
Putting $y_{m+1}=\sum_{t\in H_{m+1}}x_{t}$, it shows that the induction
can be continued and this eventually proves the theorem.

(b) Since $\left\langle x_{n}\right\rangle _{n=1}^{\infty}$ is an
almost minimal sequence, by Theorem \ref{Theorem 2.13}, we can find
some idempotent $p\in J\left(\mathbb{Z}\left[i\right]\right)$ such
that $FS\left(\left\langle x_{n}\right\rangle _{n=1}^{\infty}\right)\in p$,
for each $m\in\mathbb{Z}\left[i\right]$. Again, since $A$ is a $C^{*}$-set
in $\left(\mathbb{Z}\left[i\right],+\right)$, from Lemma \ref{Lemma 2.19},
it follows that, $s^{-1}A\in p$, for every $s\in\mathbb{Z}\left[i\right]$.
Let $A^{\star}=\left\{ s\in A:-s+A\in p\right\} $. Then by \cite[ Lemma 4.14]{key-7},
we have $A^{\star}\in p$. Then we can choose $y_{1}\in A^{\star}\cap FS\left(\left\langle x_{n}\right\rangle _{n=1}^{\infty}\right)$.
Inductively, let $m\in\mathbb{N}$ and $\left\langle y_{i}\right\rangle _{i=1}^{m}$,
$\left\langle H_{i}\right\rangle _{i=1}^{m}$ in $\mathcal{P}_{f}\left(\mathbb{N}\right)$
be chosen with the following properties:
\begin{enumerate}
\item for all $i\in\left\{ 1,2,\ldots,m-1\right\} $, $\max H_{i}<\min H_{i+1}$;
\item if $y_{i}=\sum_{t\in H_{i}}x_{t}$ then $\sum_{t\in H_{m}}x_{t}\in A^{\star}$
and $FP\left(\left\langle y_{i}\right\rangle _{i=1}^{m}\right)\subseteq A^{\star}$.
\end{enumerate}
We observe that 
\[
\left\{ \sum_{t\in H}x_{t}:H\in\mathcal{P}_{f}\left(\mathbb{Z}\left[i\right]\right),\min H>\max H_{m}\right\} \in p.
\]
 Let us set 
\[
B=\left\{ \sum_{t\in H}x_{t}:H\in\mathcal{P}_{f}\left(\mathbb{Z}\left[i\right]\right),\min H>\max H_{m}\right\} ,
\]
 $E_{1}=FS\left(\left\langle y_{i}\right\rangle _{i=1}^{m}\right)$
and $E_{2}=FP\left(\left\langle y_{i}\right\rangle _{i=1}^{m}\right)$.
Now consider 
\[
D=B\cap A^{\star}\cap\bigcap_{s\in E_{1}}\left(-s+A^{\star}\right)\cap\bigcap_{s\in E_{2}}\left(s^{-1}A^{\star}\right).
\]
 Then $D\in p$. Choose $y_{m+1}\in D$ and $H_{m+1}$ in $\mathcal{P}_{f}\left(\mathbb{N}\right)$
such that $\min H_{m+1}>\max H_{m}$. Putting $y_{m+1}=\sum_{t\in H_{m+1}}x_{t}$,
it shows that the induction can be continued and this eventually proves
the theorem.
\end{proof}

\section{The Ring of Integer Quaternions}

In this section we turn our attention to the ring of integer quaternions
(also known as Lipschitz quaternions or Lipschitz integers)
\[
L=\left\{ a+bi+cj+dk:a,b,c,d\in\mathbb{Z}\right\} ,
\]
 where $i^{2}=j^{2}=k^{2}=ijk=-1.$ In \cite{key-10} author extended
Theorem \ref{Theorem 2.2} for weak rings. So it is interesting to
check whether Theorem \ref{Theorem 2.4} is true for more general
rings other than $\mathbb{Z}$ or $\mathbb{Z}\left[i\right]$. We
examine it for ring of integer quaternions, which is a non commutative
ring. Let us note that under addition $L$ is isomorphic with $\left(\mathbb{Z}^{4},+\right)$. 

Let $\left\langle x_{n}\right\rangle _{n=1}^{\infty}$ be a minimal
sequence in $\left(\mathbb{Z},+\right)$. Now, suppose the sequence
$\left\langle w_{n}\right\rangle _{n=1}^{\infty}$ in $L$ as 
\[
w_{n}=\left\{ \begin{array}{cc}
x_{p+1} & \text{If }n=4p+1,p=0,1,\ldots\\
ix_{p+1} & \text{If }n=4p+2,p=0,1,\ldots\\
jx_{p+1} & \text{If }n=4p+3,p=0,1,\ldots\\
kx_{p} & \text{If }n=4p,p=1,2,\ldots
\end{array}\right..
\]
 Then, $\left\langle w_{n}\right\rangle _{n=1}^{\infty}$ is a minimal
sequence in $\left(L,+\right)$ by fact \ref{Fact 2.12} and $FS\left(\left\langle w_{n}\right\rangle _{n=1}^{\infty}\right)\supseteq FS\left(\left\langle x_{n}\right\rangle _{n=1}^{\infty}\right)\times FS\left(\left\langle x_{n}\right\rangle _{n=1}^{\infty}\right)\times FS\left(\left\langle x_{n}\right\rangle _{n=1}^{\infty}\right)\times FS\left(\left\langle x_{n}\right\rangle _{n=1}^{\infty}\right)$.
\begin{lem}
\label{Lemma 3.1}If $A$ is central set in $\left(L,+\right)$ then
$aA,\,Ab\text{ and }aAb$ is also a central set for any $a,b\in L\setminus\left\{ 0\right\} $.
\end{lem}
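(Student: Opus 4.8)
The plan is to handle the three claims uniformly by observing that $aA$, $Ab$ and $aAb$ are the images of $A$ under the maps $I_{a}(z)=az$, $R_{b}(z)=zb$ and $M_{a,b}(z)=azb=I_{a}\bigl(R_{b}(z)\bigr)$, each of which is an \emph{injective additive endomorphism of $(L,+)$ whose range has finite index in $L$}. Injectivity is clear because $L$ has no zero divisors (the norm $N(a+bi+cj+dk)=a^{2}+b^{2}+c^{2}+d^{2}$ is multiplicative and vanishes only at $0$), and an injective $\mathbb{Z}$-linear self-map of the free group $(L,+)\cong\mathbb{Z}^{4}$ automatically has finite cokernel. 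So it suffices to prove the following analogue of Lemma \ref{Lemma 2.18}(a): if $\phi:(L,+)\to(L,+)$ is an injective homomorphism with $[L:\phi[L]]<\infty$ and $A$ is central in $(L,+)$, then $\phi[A]$ is central. The noncommutativity of the multiplication of $L$ will not interfere, since the whole argument takes place in $(\beta L,+)$ and uses only that $(L,+)$ is a commutative group.

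First I would record, exactly as in Lemma \ref{Lemma 2.14}, that a finite-index additive subgroup $G$ of $L$ is an $\mathrm{IP}^{*}$ set: for any sequence $\langle x_{n}\rangle_{n=1}^{\infty}$ in $L$ and any $m$, among the $[L:G]+1$ partial sums $s_{0}=0$, $s_{k}=\sum_{i=m+1}^{m+k}x_{i}$ ($1\le k\le[L:G]$) two must agree modulo $G$, which yields $H\in\mathcal{P}_{f}(\{n>m\})$ with $\sum_{n\in H}x_{n}\in G$. Applying this with $G=aL$, $G=Lb$ and $G=aLb$ shows each of these is $\mathrm{IP}^{*}$, hence lies in every minimal idempotent of $\beta L$, so $\overline{aL}\cap K(\beta L)$, $\overline{Lb}\cap K(\beta L)$ and $\overline{aLb}\cap K(\beta L)$ are all nonempty; equivalently, $\overline{\phi[L]}\cap K(\beta L)\ne\emptyset$ for each of the three maps $\phi$.

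Next I would fix such a $\phi$, take a central set $A$, and choose a minimal idempotent $p\in\beta L$ with $A\in p$. By \cite[Corollary 4.22]{key-7} the continuous extension $\widetilde{\phi}:\beta L\to\beta L$ is a homomorphism of $(\beta L,+)$; since $\widetilde{\phi}[\beta L]=\overline{\phi[L]}$ (continuity together with density of $L$), $\widetilde{\phi}$ maps $\beta L$ onto the compact subsemigroup $\overline{\phi[L]}$, whence $\widetilde{\phi}[K(\beta L)]=K(\overline{\phi[L]})$; and because $\overline{\phi[L]}\cap K(\beta L)\ne\emptyset$ we get $K(\overline{\phi[L]})=\overline{\phi[L]}\cap K(\beta L)\subseteq K(\beta L)$ by \cite[Theorem 1.65]{key-7}. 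Hence $q:=\widetilde{\phi}(p)$ is a minimal idempotent of $\beta L$. Finally $B\in q$ iff $\phi^{-1}[B]\in p$, and $\phi^{-1}[\phi[A]]=A$ by injectivity of $\phi$, so $\phi[A]\in q$; thus $\phi[A]$ is central. Specializing $\phi$ to $I_{a}$, $R_{b}$, $M_{a,b}$ then gives that $aA$, $Ab$ and $aAb$ are central.

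The one point deserving care is the identity $\widetilde{\phi}[K(\beta L)]=K(\overline{\phi[L]})$: it relies on $\widetilde{\phi}$ being \emph{onto} $\overline{\phi[L]}$ and on the fact that a surjective homomorphism of compact right topological semigroups carries the smallest ideal onto the smallest ideal, which then combines with \cite[Theorem 1.65]{key-7} to locate $q$ inside $K(\beta L)$. Aside from this, the proof is essentially a line-by-line transcription of Lemma \ref{Lemma 2.18}(a), and the noncommutativity of $L$ presents no genuine obstacle.
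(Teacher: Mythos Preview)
Your proof is correct but takes a different route from the paper's. The paper argues that the homomorphism $\varphi(x)=ax$ has syndetic image (finitely many remainders under left division by $a$ give $L=\bigcup_{r}(-r+aL)$) and then invokes \cite[Lemma 4.6(I)]{key-1} as a black box to conclude that $\varphi$ preserves centrality; the cases $Ab$ and $aAb$ are dispatched by symmetry and composition. You instead unfold that black box: you show $\phi[L]$ has finite index via the determinant argument in $\mathbb{Z}^{4}$, deduce it is $\mathrm{IP}^{*}$ by pigeonhole on partial sums, and then run the $\beta L$ argument of Lemma~\ref{Lemma 2.18}(a) directly, using \cite[Theorem 1.65]{key-7} to place $\widetilde{\phi}(p)$ inside $K(\beta L)$. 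What this buys you is self-containment within the Hindman--Strauss framework (no appeal to \cite{key-1}) and a cleaner justification of finite index than the paper's ``division algorithm'' remark, which is delicate for Lipschitz integers since $L$ is not a Euclidean domain; the paper's version is in exchange much shorter.
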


\begin{proof}
We will prove it for left multiplication as right multiplication follows
from a similar method and $aAb$ is central follows from combined
application of the previous two.

Let $A$ be any central set in $\left(L,+\right)$. Take the homomorphism
$\varphi:L\rightarrow L$ define as $\varphi\left(x\right)=a\cdot x$.
Let $F_{a}$ be the set of all possible remainders when any $y\in L$
is divided by $a$. Thus, by division algorithm, $F_{a}$ is finite
and $L=\bigcup_{r\in F_{a}}\left(-r+\varphi\left(L\right)\right)$.
Thus $\varphi\left(L\right)=a\cdot L$ is syndetic ( and thus piecewise
syndetic) and by \cite[Lemma 4.6 (I)]{key-1}, $\varphi\left(A\right)=aA$
is central.
\end{proof}
\begin{lem}
\label{Lemma 3.2}Let $A\subseteq L$, and let $a,b\in L$. 

(a) If $A$ is a $\text{central}^{*}$ set in $\left(L,+\right)$,
then $a^{-1}A$ is a $\text{central}^{*}$ set in $\left(L,+\right)$.

(b) If $A$ is a $\text{central}^{*}$ set in $\left(L,+\right)$,
then $Ab^{-1}$ is a $\text{central}^{*}$ set in $\left(L,+\right)$.

(c) If $A$ is a $\text{central}^{*}$ set in $\left(L,+\right)$,
then $a^{-1}Ab^{-1}$ is a $\text{central}^{*}$ set in $\left(L,+\right)$.
\end{lem}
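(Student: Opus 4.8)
The plan is to imitate the proof of Lemma \ref{Lemma 2.19}(a), using the duality that a set is central$^{*}$ exactly when it meets every central set, together with the translation results of Lemma \ref{Lemma 3.1}. For part (a), it suffices to show that $a^{-1}A$ meets every central subset of $(L,+)$: if $L\setminus a^{-1}A$ were central it would be disjoint from $a^{-1}A$, which is impossible. So fix a central set $C\subseteq L$. By Lemma \ref{Lemma 3.1} the left translate $aC$ is again central, and since $A$ is central$^{*}$ we get $A\cap aC\neq\emptyset$. Choose $y\in A\cap aC$ and write $y=ac$ with $c\in C$; then $ac=y\in A$, so $c\in a^{-1}A$, whence $c\in C\cap a^{-1}A$.

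Part (b) is the mirror image: fix a central set $C$, note that $Cb$ is central by Lemma \ref{Lemma 3.1}, obtain $y\in A\cap Cb$ since $A$ is central$^{*}$, write $y=cb$ with $c\in C$, and conclude $cb=y\in A$, i.e. $c\in C\cap Ab^{-1}$. Part (c) then follows either by the same argument applied to the two-sided translate $aCb$ (central by two applications of Lemma \ref{Lemma 3.1}), producing $c\in C$ with $acb\in A$ and hence $c\in C\cap a^{-1}Ab^{-1}$, or simply by composing (a) and (b) once one observes the set equality $a^{-1}(Ab^{-1})=a^{-1}Ab^{-1}=\{x\in L:axb\in A\}$, which holds because $ax\in Ab^{-1}\iff (ax)b\in A\iff axb\in A$.

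The only points needing care are bookkeeping issues coming from noncommutativity: the preimage sets $a^{-1}A=\{x:ax\in A\}$, $Ab^{-1}=\{x:xb\in A\}$ and $a^{-1}Ab^{-1}=\{x:axb\in A\}$ must be read on the correct side, and Lemma \ref{Lemma 3.1} must be invoked for left translates in (a) and for right translates in (b). I do not expect a genuine obstacle here; the substantive work is entirely contained in Lemma \ref{Lemma 3.1} (that $aA$, $Ab$ and $aAb$ remain central), which itself rests on $aL$ and $Lb$ being syndetic via the quaternion division algorithm. The one thing worth double-checking explicitly is the displayed set equality used to deduce (c) from (a) and (b), so that the two-step derivation is rigorous.
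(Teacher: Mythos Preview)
Your proposal is correct and follows exactly the approach the paper intends: the paper's proof is the single line ``analogous to Lemma \ref{Lemma 2.19}'', and what you have written is precisely that analogue spelled out, using Lemma \ref{Lemma 3.1} in place of Lemma \ref{Lemma 2.18}(a) and taking the appropriate care with left versus right translates. Your extra remarks on the noncommutative bookkeeping and the set identity $a^{-1}(Ab^{-1})=a^{-1}Ab^{-1}$ are sound and go slightly beyond what the paper records.
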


\begin{proof}
The proof is analogous to \ref{Lemma 2.19}.
\end{proof}
The following useful definition is from \cite{key-7}.
\begin{defn}
\label{=00005BHS, Def. 16.36=00005D} Let $\left(S,\cdot\right)$
be a semigroup, let $\left\langle x_{n}\right\rangle _{n=1}^{\infty}$
be a sequence in $S$ and let $k\in\mathbb{N}$. Then $AP\left(\left\langle x_{n}\right\rangle _{n=1}^{k}\right)$
is the set of all products of terms of $\left\langle x_{n}\right\rangle _{n=1}^{k}$
in any order with no repetitions. Similarly $AP\left(\left\langle x_{n}\right\rangle _{n=1}^{\infty}\right)$
is the set of all possible products of finite terms of $\left\langle x_{n}\right\rangle _{n=1}^{\infty}$
in any order with no repetitions.
\end{defn}

\begin{thm}
\label{Theorem 3.4}Let $A$ be a $\text{central}^{*}$ set in $\left(L,+\right)$
and let $\left\langle y_{n}\right\rangle _{n=1}^{\infty}$ be any
minimal sequence in $\left(L,+\right)$. Then there exists a sum subsystem
$\left\langle x_{n}\right\rangle _{n=1}^{\infty}$ of $\left\langle y_{n}\right\rangle _{n=1}^{\infty}$
such that if $m\geq2,F\in\mathcal{P}_{f}\left(\mathbb{N}\right)$
with $\min F\geq m$ and $b\in AP\left(\left\langle x_{n}\right\rangle _{n=1}^{m-1}\right)$,
then $b\cdot\sum_{t\in F}x_{t}\in A$. In particular, 
\[
FS\left(\left\langle x_{n}\right\rangle _{n=1}^{\infty}\right)\cup\left\{ b\cdot x_{m}:m\geq2\text{ and }b\in AP\left(\left\langle x_{n}\right\rangle _{n=1}^{m-1}\right)\right\} \subseteq A.
\]
\end{thm}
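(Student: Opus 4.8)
The plan is to imitate the induction that proved the $\mathbb{Z}[i]$ theorem, but to keep track of the noncommutativity by enlarging the set of ``forbidden'' translates/dilates at each stage. Since $\langle y_{n}\rangle_{n=1}^{\infty}$ is a minimal sequence in $(L,+)$, there is a minimal idempotent $p\in\beta L$ with $FS(\langle y_{n}\rangle_{n=1}^{\infty})\in p$, and in fact $FS(\langle y_{n}\rangle_{n=m}^{\infty})\in p$ for every $m$. Because $A$ is $\mathrm{central}^{*}$, Lemma~\ref{Lemma 3.2} gives $a^{-1}A\in p$ for every $a\in L\setminus\{0\}$ (this is the crucial input: we only need left-translates of the form $b^{-1}A$, so part~(a) of Lemma~\ref{Lemma 3.2} suffices). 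Set $A^{\star}=\{s\in A:-s+A\in p\}$; then $A^{\star}\in p$ by \cite[Lemma 4.14]{key-7}, and moreover $-s+A^{\star}\in p$ for each $s\in A^{\star}$, and $b^{-1}A^{\star}\in p$ for each $b\in L\setminus\{0\}$ (applying Lemma~\ref{Lemma 3.2}(a) to the $\mathrm{central}^{*}$ set $A^{\star}$, noting $A^{\star}$ is $\mathrm{central}^{*}$ since $A^{\star}\supseteq$ no—rather, one checks $b^{-1}A^{\star}\in p$ directly from $b^{-1}A\in p$ and the definition of $A^{\star}$, exactly as in the proof of part~(b) of the $\mathbb{Z}[i]$ theorem).

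Next I would run the induction. Choose $x_{1}\in A^{\star}\cap FS(\langle y_{n}\rangle_{n=1}^{\infty})$, with $H_{1}\in\mathcal{P}_{f}(\mathbb{N})$ and $x_{1}=\sum_{t\in H_{1}}y_{t}$. Inductively, suppose $x_{1},\dots,x_{m}$ and $H_{1},\dots,H_{m}$ have been chosen with $\max H_{i}<\min H_{i+1}$, with $x_{i}=\sum_{t\in H_{i}}y_{t}$, and such that every relevant partial configuration already lies in $A^{\star}$. To choose $x_{m+1}$, form the finite sets $E_{1}=FS(\langle x_{i}\rangle_{i=1}^{m})$ and $E_{2}=AP(\langle x_{i}\rangle_{i=1}^{m})$ (all products in any order, no repetitions — this is the $L$-analogue replacing $FP$, made necessary because $L$ is noncommutative so ``finite products'' is not well-defined as a single value). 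Put
\[
B=\Bigl\{\textstyle\sum_{t\in H}y_{t}:H\in\mathcal{P}_{f}(\mathbb{N}),\ \min H>\max H_{m}\Bigr\},
\]
which lies in $p$ since $p$ is an idempotent carrying $FS(\langle y_{n}\rangle_{n>\max H_{m}}^{\infty})$, and set
\[
D=B\cap A^{\star}\cap\bigcap_{s\in E_{1}}(-s+A^{\star})\cap\bigcap_{b\in E_{2}}(b^{-1}A^{\star}).
\]
Each intersectand is in $p$: $B,A^{\star}\in p$; $-s+A^{\star}\in p$ for $s\in E_{1}\subseteq A^{\star}$; and $b^{-1}A^{\star}\in p$ for $b\in E_{2}\subseteq L\setminus\{0\}$. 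Since $E_{1},E_{2}$ are finite, $D\in p$, so $D\neq\emptyset$; pick $x_{m+1}\in D$ with a witnessing $H_{m+1}$ satisfying $\min H_{m+1}>\max H_{m}$. This continues the induction.

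Finally I would verify the claimed conclusion from the construction. The inclusion $FS(\langle x_{n}\rangle_{n=1}^{\infty})\subseteq A$ follows because each finite sum $x_{n_{1}}+\dots+x_{n_{r}}$ (say $n_{1}<\dots<n_{r}$) is built up step by step: at the stage where $x_{n_{r}}$ was chosen it lay in $-(x_{n_{1}}+\dots+x_{n_{r-1}})+A^{\star}$ since $x_{n_{1}}+\dots+x_{n_{r-1}}\in E_{1}$ at that stage, so the full sum lies in $A^{\star}\subseteq A$. For the mixed statement: given $m\geq2$, $F\in\mathcal{P}_{f}(\mathbb{N})$ with $\min F\geq m$, and $b\in AP(\langle x_{n}\rangle_{n=1}^{m-1})$, note $b\in E_{2}$ at every stage $\geq m-1$; writing $F=\{n_{1}<\dots<n_{r}\}$, one shows by the same telescoping argument that $\sum_{t\in F}x_{t}\in A^{\star}$, and then since $x_{n_{r}}$ (the last term added, chosen at stage $n_{r}\geq m$) was taken in $b^{-1}A^{\star}$ while the earlier sum $x_{n_{1}}+\dots+x_{n_{r-1}}$ also lies in an appropriate translate, one concludes $b\cdot\sum_{t\in F}x_{t}\in A$; the special case $F=\{m\}$ gives $b\cdot x_{m}\in A$.

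The main obstacle is bookkeeping the noncommutativity cleanly: one must be careful that ``$b^{-1}A^{\star}$'' means $\{x:bx\in A^{\star}\}$ (left division, matching left multiplication $b\cdot\sum x_{t}$), that Lemma~\ref{Lemma 3.2}(a) is exactly the statement needed for these left-translates, and that $AP$ rather than $FP$ must be used throughout — and correspondingly that the inductive hypothesis records ``$\sum_{t\in H_{m}}y_{t}\in A^{\star}$ and for all $b\in AP(\langle x_{i}\rangle_{i=1}^{m-1})$, $b\cdot x_{m}\in A^{\star}$'' (or the stronger statement with arbitrary $F$), which is what lets the induction close. Everything else is a routine adaptation of the $\mathbb{Z}[i]$ argument already given.
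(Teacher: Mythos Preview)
Your overall strategy is right, and the first half (picking a minimal idempotent $p$, using Lemma~\ref{Lemma 3.2}(a) to get $b^{-1}A\in p$, and running an induction that intersects tails of $FS(\langle y_n\rangle)$ with finitely many translates/dilates) matches the paper. The gap is in the final verification of the product clause when $|F|\geq 2$.

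With your set
\[
D=B\cap A^{\star}\cap\bigcap_{s\in E_{1}}(-s+A^{\star})\cap\bigcap_{b\in E_{2}}(b^{-1}A^{\star}),
\]
you obtain, for $F=\{n_{1}<\cdots<n_{r}\}$, that $x_{n_{r}}\in b^{-1}A^{\star}$ (so $b\,x_{n_{r}}\in A^{\star}$) and that $x_{n_{r}}\in -(x_{n_{1}}+\cdots+x_{n_{r-1}})+A^{\star}$ (so $\sum_{t\in F}x_{t}\in A^{\star}$). Neither of these yields $b\cdot\sum_{t\in F}x_{t}\in A$: what you need is $\sum_{t\in F}x_{t}\in b^{-1}A$, i.e.\ $x_{n_{r}}\in -(x_{n_{1}}+\cdots+x_{n_{r-1}})+b^{-1}A$, and your $D$ never intersects with sets of that shape. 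The sentence ``the earlier sum $x_{n_{1}}+\cdots+x_{n_{r-1}}$ also lies in an appropriate translate'' is exactly the step that is not supported by the construction: $b\cdot(x_{n_{1}}+\cdots+x_{n_{r-1}})$ is neither in $E_{1}$ nor in $E_{2}$, and $b^{-1}A^{\star}$ is not the same as $(b^{-1}A)^{\star}$, so you cannot telescope inside $b^{-1}A^{\star}$ the way you telescope inside $A^{\star}$.

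The paper avoids this by carrying a \emph{sequence} of sets $B_{i}\in p$ rather than a single $A^{\star}$: one demands $B_{i}\subseteq\bigcap\{a^{-1}A:a\in AP(\langle x_{t}\rangle_{t=1}^{\,i-1})\}$ and maintains inductively that $\sum_{j\in F}x_{j}\in B_{m}^{\star}$ whenever $m=\min F$. The telescoping then takes place inside each $B_{m}^{\star}$ (via intersecting $B_{n+1}$ with $\bigcap_{m}\bigcap_{a\in E_{m}}(-a+B_{m}^{\star})$), and since $B_{m}\subseteq b^{-1}A$ for the relevant $b$, the conclusion $b\cdot\sum_{t\in F}x_{t}\in A$ drops out immediately. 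Your argument can be repaired either by adopting this layered $B_{i}$ scheme, or equivalently by enlarging $D$ at stage $m$ to also include $\bigcap_{b\in E_{2}}\bigcap_{s\in E_{1}}\bigl(-s+(b^{-1}A)^{\star}\bigr)$ and strengthening the inductive hypothesis to record $\sum_{t\in F}x_{t}\in (b^{-1}A)^{\star}$ for all admissible $b$; but as written, the induction does not close for $|F|\geq 2$.
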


\begin{proof}
Since $\left\langle y_{n}\right\rangle _{n=1}^{\infty}$ is a minimal
sequence in $\left(L,+\right)$ we can find some minimal idempotent
$p$ of $\left(\beta L,+\right)$ for which $FS\left(\left\langle x_{n}\right\rangle _{n=1}^{\infty}\right)\in p$.
Then by lemma \ref{Lemma 3.2}, we have for each $a\in S$ that $a^{-1}A$
is a $\text{central}^{*}$ set in $\left(L,+,\right)$ and hence is
in $p$. For any set $A$, let $A^{\star}=\left\{ s\in A:-s+A\in p\right\} $.

Take $B_{1}=FS\left(\left\langle y_{n}\right\rangle _{n=1}^{\infty}\right)$
and thus $B_{1}\in p$. Pick $x_{1}\in B_{1}^{\star}$ and $H_{1}\in\mathcal{P}_{f}\left(\mathbb{N}\right)$
such that $x_{1}=\sum_{t\in H_{1}}y_{t}$.

Inductively, let $n\in\mathbb{N}$ and assume that we have chosen
$\left\langle x_{i}\right\rangle _{i=1}^{n}$, $\left\langle H_{i}\right\rangle _{i=1}^{n}$
and $\left\langle B_{i}\right\rangle _{i=1}^{n}$ such that for each
$i\in\left\{ 1,2,\ldots,n\right\} $:
\begin{enumerate}
\item $x_{i}=\sum_{t\in H_{i}}y_{t}$,
\item if $i>1$, then $\min H_{i}>\max H_{i-1}$
\item $B_{i}\in p$,
\item if $\emptyset\neq F\subseteq\left\{ 1,2,\ldots,i\right\} $ and $m=\min F$,then
$\sum_{j\in F}x_{j}\in B_{m}^{\star}$ and
\item if $i>1$, then $B_{i}\subseteq\bigcap\left\{ a^{-1}A:a\in AP\left(\left\langle x_{t}\right\rangle _{t=1}^{i-1}\right)\right\} $.
\end{enumerate}
Only hypotheses (1), (3) and (4) apply at $n=1$ and they hold trivially.
Let $k=\max H_{n}+1$. By assumption $FS\left(\left\langle y_{t}\right\rangle _{t=k}^{\infty}\right)\in p$.
Again, we have by lemma \ref{Lemma 3.2}, that for each $a\in L$,
$a^{-1}A$ is an $\text{central}^{*}$ set in $\left(L,+\right)$
and is thus in $p$.

Now, for any $m\in\left\{ 1,2,\ldots,n\right\} $ let
\[
E_{m}=\left\{ \sum_{j\in F}x_{j}:\emptyset\neq F\subseteq\left\{ 1,2,\ldots,n\right\} \text{ and }m=\min F\right\} .
\]

By hypothesis (4) we have for each $m\in\left\{ 1,2,\ldots.n\right\} $
and each $a\in E_{m}$ that $a\in B_{m}^{\star}$ and hence, by \cite[Lemma 4.14]{key-7},
$-a+B_{m}^{\star}\in p$. Let 
\[
B_{m+1}=FS\left(\left\langle y_{t}\right\rangle _{t=k}^{\infty}\right)\cap\bigcap\left\{ a^{-1}A:a\in AP\left(\left\langle x_{t}\right\rangle _{t=1}^{n}\right)\right\} \cap\bigcap_{m=1}^{n}\bigcap_{a\in E_{m}}\left(-a+B_{m}^{\star}\right).
\]
 Then $B_{n+1}\in p$. Choose $x_{n+1}=B_{n+1}^{\star}$. Since $x_{n+1}\in FS\left(\left\langle y_{t}\right\rangle _{t=k}^{\infty}\right)$,
choose $H_{n+1}\in\mathcal{P}_{f}\left(\mathbb{N}\right)$ such that
$\min H_{n+1}\geq k$ and $x_{n+1}=\sum_{t\in H_{n+1}}y_{t}$. Then
hypotheses (1), (2), (3), and (5) are satisfied directly.

To verify hypothesis (4), let $\emptyset\neq F\subseteq\left\{ 1,2,\ldots,n+1\right\} $
and let $m=\min F$. If $n+1\notin F$ , the conclusion holds by hypothesis,
so assume that $n+1\in F$. If $F=\left\{ n+1\right\} $, then $\sum_{j\in F}x_{j}=x_{n+1}\in B_{n+1}^{\star}$,
so assume $F\neq\left\{ n+1\right\} $ and let $G=F\setminus\left\{ n+1\right\} $.
Let $a=\sum_{j\in F}x_{j}$. Then $a\in E_{m}$ so $x_{n+1}\in-a+B_{m}^{\star}$
so $\sum_{j\in F}x_{j}=a+x_{n+1}\in B_{m}^{\star}$ as required.

We thus have that $\left\langle x_{n}\right\rangle _{n=1}^{\infty}$
is a sum subsystem of $\left\langle y_{n}\right\rangle _{n=1}^{\infty}$.
To complete the proof, let $m\geq2$, let $F\in\mathcal{P}_{f}\left(\mathbb{N}\right)$
with $\min F\geq m$ and let $a\in AP\left(\left\langle x_{n}\right\rangle _{n=1}^{m-1}\right)$.
Then by hypotheses (4) and (5), $\sum_{j\in F}x_{j}\in B_{m}\subseteq a^{-1}A$
so that $a\cdot\sum_{t\in F}x_{t}\in A$.
\end{proof}
\begin{thm}
Let $A$ be a $central^{*}$ set in $\left(L,+\right)$. Also let
$\left\langle y_{n}\right\rangle _{n=1}^{\infty}$ be a minimal sequence
in $\left(L,+\right)$. Then there exists a sum subsystem $\left\langle x_{n}\right\rangle _{n=1}^{\infty}$
of $\left\langle y_{n}\right\rangle _{n=1}^{\infty}$ in $L$ such
that 
\[
FS\left(\left\langle x_{n}\right\rangle _{n=1}^{\infty}\right)\cup AP\left(\left\langle x_{n}\right\rangle _{n=1}^{\infty}\right)\subseteq A.
\]
\end{thm}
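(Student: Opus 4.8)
The plan is to re-run the inductive construction from the proof of Theorem~\ref{Theorem 3.4}, reinforcing the bookkeeping so that it controls two-sided multiplication rather than only left multiplication; the decisive new ingredient is Lemma~\ref{Lemma 3.2}(c). Since $\left\langle y_{n}\right\rangle _{n=1}^{\infty}$ is a minimal sequence, I would first fix a minimal idempotent $p$ of $\left(\beta L,+\right)$ with $FS\left(\left\langle y_{n}\right\rangle _{n=1}^{\infty}\right)\in p$. As $A$ is central$^{*}$ we have $A\in p$, and by Lemma~\ref{Lemma 3.2}(a),(b),(c) also $a^{-1}A\in p$, $Ab^{-1}\in p$ and $a^{-1}Ab^{-1}\in p$ for all nonzero $a,b\in L$; put $A^{\star}=\left\{ s\in A:-s+A\in p\right\} $, so $A^{\star}\in p$. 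Finally, the only idempotent of $\left(\beta L,+\right)$ lying in $L$ is $0$, which is not a minimal idempotent, so $p$ is nonprincipal and hence $L\setminus\left\{ 0\right\} \in p$.

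Next I would build $\left\langle x_{n}\right\rangle _{n=1}^{\infty}$ in $L$, $\left\langle H_{n}\right\rangle _{n=1}^{\infty}$ in $\mathcal{P}_{f}\left(\mathbb{N}\right)$ and $\left\langle B_{n}\right\rangle _{n=1}^{\infty}$ keeping all the hypotheses used in the proof of Theorem~\ref{Theorem 3.4}, but with two reinforcements: ($3'$) $B_{i}\in p$ and $B_{i}\subseteq A^{\star}\cap\left(L\setminus\left\{ 0\right\} \right)$; and ($5'$) for $i>1$,
\[
B_{i}\subseteq\bigcap\left\{ w_{1}^{-1}Aw_{2}^{-1}:w_{1},w_{2}\in AP\left(\left\langle x_{t}\right\rangle _{t=1}^{i-1}\right)\cup\left\{ 1\right\} \right\} ,
\]
where $1$ is the multiplicative identity of $L$. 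One starts with $B_{1}=FS\left(\left\langle y_{n}\right\rangle _{n=1}^{\infty}\right)\cap A^{\star}\cap\left(L\setminus\left\{ 0\right\} \right)$, choosing $x_{1}\in B_{1}^{\star}$ and $H_{1}$ with $x_{1}=\sum_{t\in H_{1}}y_{t}$. At the inductive step, with $k=\max H_{n}+1$ and $E_{m}=\left\{ \sum_{j\in F}x_{j}:\emptyset\neq F\subseteq\left\{ 1,\ldots,n\right\} ,\,m=\min F\right\} $ for $m\leq n$ exactly as in Theorem~\ref{Theorem 3.4}, set
\[
B_{n+1}=FS\left(\left\langle y_{t}\right\rangle _{t=k}^{\infty}\right)\cap A^{\star}\cap\left(L\setminus\left\{ 0\right\} \right)\cap\bigcap_{w_{1},w_{2}\in AP\left(\left\langle x_{t}\right\rangle _{t=1}^{n}\right)\cup\left\{ 1\right\} }w_{1}^{-1}Aw_{2}^{-1}\ \cap\ \bigcap_{m=1}^{n}\bigcap_{a\in E_{m}}\left(-a+B_{m}^{\star}\right).
\]
Each $w_{1},w_{2}$ occurring here is a nonzero element of $L$, since every $x_{t}$ is nonzero and $L$, as a subring of the real quaternions, has no zero divisors; hence each set in this finite intersection belongs to $p$ (using Lemma~\ref{Lemma 3.2}, and \cite[Lemma 4.14]{key-7} for the sets $-a+B_{m}^{\star}$, noting $E_{m}\subseteq B_{m}^{\star}$ by the $FS$-hypothesis), so $B_{n+1}\in p$. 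Then choose $x_{n+1}\in B_{n+1}^{\star}$ and $H_{n+1}$ with $\min H_{n+1}\geq k$ and $x_{n+1}=\sum_{t\in H_{n+1}}y_{t}$; now (1), (2), ($3'$), ($5'$) hold by construction, and the $FS$-hypothesis at $n+1$ is verified by the same case analysis on whether $n+1\in F$ as in Theorem~\ref{Theorem 3.4}.

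To conclude, $\left\langle x_{n}\right\rangle _{n=1}^{\infty}$ is a sum subsystem of $\left\langle y_{n}\right\rangle _{n=1}^{\infty}$; if $\emptyset\neq F$ is finite and $m=\min F$, then $\sum_{j\in F}x_{j}\in B_{m}^{\star}\subseteq A$, so $FS\left(\left\langle x_{n}\right\rangle _{n=1}^{\infty}\right)\subseteq A$. For $AP\left(\left\langle x_{n}\right\rangle _{n=1}^{\infty}\right)\subseteq A$, take $v\in AP\left(\left\langle x_{n}\right\rangle _{n=1}^{\infty}\right)$, written as a product of distinct terms, and let $N$ be the largest index among them. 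Since $x_{N}$ occurs exactly once, $v=w_{1}x_{N}w_{2}$ where $w_{1}$ is the product of the terms to the left of $x_{N}$ and $w_{2}$ the product of those to the right, and $w_{1},w_{2}\in AP\left(\left\langle x_{t}\right\rangle _{t=1}^{N-1}\right)\cup\left\{ 1\right\} $. If $N=1$ then $v=x_{1}\in B_{1}\subseteq A$ by ($3'$); if $N>1$ then $x_{N}\in B_{N}\subseteq w_{1}^{-1}Aw_{2}^{-1}$ by ($5'$), i.e.\ $v=w_{1}x_{N}w_{2}\in A$.

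The routine parts are the $FS$-bookkeeping (word for word as in Theorem~\ref{Theorem 3.4}) and checking that every constraint set is a member of $p$. The step needing the most care is ($5'$): one must observe that in the noncommutative ring $L$ an element of $AP\left(\left\langle x_{n}\right\rangle _{n=1}^{\infty}\right)$ factors around its largest-index term as $w_{1}x_{N}w_{2}$ — rather than merely as $b\cdot x_{m}$, which sufficed in Theorem~\ref{Theorem 3.4} — so that the relevant containment is precisely the two-sided one supplied by Lemma~\ref{Lemma 3.2}(c). The small subtlety folded into ($3'$) is that $w_{1}^{-1}Aw_{2}^{-1}$ is defined and central$^{*}$ only for nonzero $w_{1},w_{2}$, which is ensured by $L\setminus\left\{ 0\right\} \in p$ together with $L$ being a domain.
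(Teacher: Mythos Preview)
Your proposal is correct and follows essentially the same approach as the paper: you rerun the induction of Theorem~\ref{Theorem 3.4} with hypothesis~(5) strengthened to a two-sided condition $B_{i}\subseteq\bigcap w_{1}^{-1}Aw_{2}^{-1}$ governed by Lemma~\ref{Lemma 3.2}(c), exactly as the paper does (the paper writes the three cases $a^{-1}A$, $Ab^{-1}$, $a^{-1}Ab^{-1}$ separately while you fold them into $w_{1},w_{2}\in AP\cup\{1\}$). Your write-up is in fact more complete than the paper's, since you make explicit the factoring $v=w_{1}x_{N}w_{2}$ around the largest index needed to extract $AP\subseteq A$, and you address the minor point that each $w_{j}$ must be nonzero for Lemma~\ref{Lemma 3.2} to apply.
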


\begin{proof}
Modify the hypothesis (5) in the theorem \ref{Theorem 3.4} as: 

If $i>1$, then 
\begin{align*}
B_{i} & \subseteq\bigcap\left\{ a^{-1}A:a\in AP\left(\left\langle x_{t}\right\rangle _{t=1}^{i-1}\right)\right\} \cap\bigcap\left\{ Ab^{-1}:b\in AP\left(\left\langle x_{t}\right\rangle _{t=1}^{i-1}\right)\right\} \\
 & \bigcap\left\{ a^{-1}Ab^{-1}:a,b\in AP\left(\left\langle x_{t}\right\rangle _{t=1}^{i-1}\right)\right\} .
\end{align*}

Then replace the definition of $B_{n+1}$ with 
\begin{align*}
B_{n+1} & =FS\left(\left\langle y_{t}\right\rangle _{t=k}^{\infty}\right)\cap\bigcap\left\{ a^{-1}A:a\in AP\left(\left\langle x_{t}\right\rangle _{t=1}^{i-1}\right)\right\} \\
 & \cap\bigcap\left\{ Ab^{-1}:b\in AP\left(\left\langle x_{t}\right\rangle _{t=1}^{i-1}\right)\right\} \cap\bigcap\left\{ a^{-1}Ab^{-1}:a,b\in AP\left(\left\langle x_{t}\right\rangle _{t=1}^{i-1}\right)\right\} \\
 & \cap\bigcap_{m=1}^{n}\bigcap_{a\in E_{m}}\left(-a+B_{m}^{\star}\right).
\end{align*}

This proves the theorem.
\end{proof}

\end{document}